\newtheorem{customassumption}{Assumption}
\definecolor{lightblue}{rgb}{0.95, 0.975, 1.0}
\colorlet{mdtRed}{blue!50!black}
\newtheorem{assumption}{Assumption}[section]
\newtheorem{theorem}{Theorem}[section]
\begin{document}
\begin{center}
	{\fontsize{15}{24}\selectfont \textbf{A Surrogate Framework for General Cruise-Phase Optimization of Commercial Aircraft}}
\end{center}
\textbf{Amin Jafarimoghaddam}$^\star$\footnote{$^\star$Corresponding Author.}, \textbf{Manuel Soler}$^{\dagger}$, \textbf{María Cerezo-Magaña} $^{\dagger}$\\	
$^{\star,\dagger}${\em{Department of Aerospace Engineering, Universidad Carlos III de Madrid. Avenida de la Universidad, 30, Leganes, 28911 Madrid, Spain
}}\footnote{Email addresses: ajafarim@pa.uc3m.es (A. Jafarimoghaddam), masolera@ing.uc3m.es (M. Soler), mcerezo@ing.uc3m.es (M. Cerezo)}	\\


\begin{center}
	\textbf{\large Abstract}
\end{center}
Optimizing commercial aircraft cruise trajectories using the Pontryagin Maximum Principle (PMP) is particularly challenging due to the nonlinear dynamics of aircraft speed, complex costate dynamics, and the inclusion of two continuous controls, one of which (thrust) is typically a singular, affine input. We present a surrogate optimization framework, accounting for space-dependent wind fields, flight-sensitive areas, and equality/inequality constraints. To simulate atmospheric wind, we propose a divergence-free composite inviscid flow model showing structural consistency with large-scale patterns observed in atmospheric wind datasets. Flight-sensitive areas are clustered as parametrized ellipses, considered as soft constraints. Building on turnpike property, the surrogate framework enforces quasi-steady aircraft speed on infinitesimal segments, thereby eliminating the thrust control and treating speed as a direct control. Comprehensive numerical experiments on various setups confirm that the surrogate framework reproduces optimal solutions identical to those of the original problem while significantly reducing computational time and implementation efforts. The accompanying \textbf{supplementary material} includes implementation details and open-source code (with interactive built-in and user-defined environments for aircraft and wind functions, and for flight-sensitive areas), together with an additional example that examines wind-induced variability in optimal solutions via Monte Carlo analysis enabled by the high-speed PMP-based surrogate framework.

\bigskip
\noindent \textbf{Keywords}: Pontryagin maximum principle; Commercial aircraft trajectory optimization; Climate-Optimal trajectories; Contrail avoidance; Monte Carlo analysis
\bigskip
\section{Introduction}\label{sec1}
Minimizing direct‐operating cost (DOC), which combines fuel burn and arrival time, remains a cornerstone of commercial aviation optimization. Commercial jets, however, must often reroute around flight‐sensitive regions, whether to avoid hazardous weather (e.g., thunderstorms \cite{1}) or to steer clear of designated climate‐sensitive zones. In particular, penetration of climate‐sensitive areas can trigger persistent contrail formation, with demonstrable impacts on global warming \cite{2,3}. To capture these operational constraints, we augment the DOC functional with penalty terms that penalize any trajectory segment traversing such regions.

In this study, we develop a general surrogate optimization framework grounded in Pontryagin’s Maximum Principle (PMP), also known as the indirect method, for optimizing the cruise phase of commercial aircraft. The objective function integrates both DOC and penalties associated with traversing flight-sensitive regions.

It is important to note that, despite extensive research on various optimization strategies for commercial aircraft trajectories, Pontryagin’s Maximum Principle (PMP) has received relatively limited attention in the literature \cite{4}. This may be due to the intricate and often sensitive nature of the resulting co-state system. The complexity further increases in the presence of discontinuous (or singular) controls, multiple control variables, and state inequality constraints; each of which can lead to complex junction criteria in the optimization system \cite{5}. Notably, these challenges are inherent characteristics of the general cruise-flight optimization problem.

To contextualize the contributions of this work, we begin with a concise overview of existing research that applies the PMP to the optimization of commercial aircraft cruise trajectories. This review serves to highlight the methodological novelty and broader scope of the present study.

A careful review of the literature reveals that PMP-based approaches to cruise-flight optimization generally fall into two main categories:

\textbf{1) One-dimensional models:} These simplify cruise flight dynamics to a single spatial dimension. Typical formulations involve two or three state variables, such as aircraft mass, speed, and occasionally the distance traveled. Control input is usually limited to throttle setting. Studies within this framework include \cite{5,6,7,9,10,11,12}. To elaborate, Jafarimoghaddam and Soler \cite{5,12} generally target flights in a vertical plane. Although the full vertical plane is resolved with two controls (flight path angle and throttle setting) considered as active controls in free-routing flight optimization, the cruise phase is effectively reduced to one dimension, where the lateral path (arising from horizontal-plane considerations) is ignored. Pargett and Ardema \cite{6} formulate the constant-altitude cruise range as a singular optimal control problem with mass and speed as states and throttle setting as the control, and derive the singular arc for the control problem. Rivas and Valenzuela \cite{7} address maximum-range cruise at constant altitude by considering the compressible drag model by Cavcar and Cavcar \cite{8}. Franco et al.\ \cite{9} analyze minimum-fuel constant-altitude cruise subject to a fixed arrival-time constraint as a singular optimal control problem. Franco and Rivas \cite{10} extend the fixed-time cruise analysis to include constant horizontal winds. Finally, Franco and Rivas \cite{11} solve climb--cruise--descent trajectories in a vertical plane. However, in each phase, they employ only one active control to formulate the optimal control problem, and subsequently use junction criteria to connect the flight phases.

\textbf{2) Two-dimensional models:} These consider horizontal plane navigation using two spatial coordinates. At a minimum, these models include aircraft position as state variables and use heading angle as the control. Some studies, such as Ng et al.\ \cite{14,15}, and Sridhar et al.\ \cite{13} also incorporate aircraft mass, but omit its influence on the co-state equations, specifically neglecting the mass derivative of the Hamiltonian. More recent contributions by Jafarimoghaddam and Soler \cite{16,Amin-zermelo} broaden the model to include both throttle and heading angle as active controls in a four/five-state system. However, they do not address key elements such as avoidance of flight-sensitive regions, Mach constraints, or compressibility effects. Furthermore, the PMP framework developed in \cite{16} and \cite{Amin-zermelo}, while theoretically complete, presents considerable challenges in terms of computational implementation. Specifically, these models require prior knowledge of the number and sequence of optimal arcs, and their performance could sometimes be sensitive to the initialization of switching times. Notably, a significant portion of the theoretical and computational complexity arises from the dynamics of the aircraft speed and the behavior of its associated co-state.

In this research, we present a unified, PMP-based surrogate framework for cruise-flight optimization that circumvents the theoretical and numerical difficulties of the full optimal-control formulation while preserving the optimal solutions of the original problem. The approach retains the full four-state cruise model but removes the singular throttle control by recasting it as an equivalent speed-control law, resulting in two active controls: aircraft speed and heading angle. Spatially varying winds are formulated via a composite inviscid flow model (well-suited for atmospheric winds) which can simply be calibrated to reanalysis data, and flight-sensitive regions are represented by general space-dependent cost or penalty fields. In addition, the surrogate framework naturally accommodates practical inequality constraints (e.g., Mach, and thrust). Consequently, the method produces a compact, physically consistent boundary-value problem that is substantially more tractable numerically, enabling accurate, high-fidelity trajectory optimization with substantially reduced computational time.

\section{Problem Statement}\label{sec2}
We adopt the following assumptions and modeling choices for the point-mass cruise dynamics of commercial aircraft: \textbf{a)} {flat Earth approximation;} for flight altitudes below approximately $30\,\text{km}$ and speeds under $800\,\text{m/s}$, the flat Earth model has been shown to be effectively equivalent to the spherical Earth model~\cite{17}, and \textbf{b)} {simplified heading dynamics;} in the context of commercial aircraft operations, multiple studies have shown that treating the heading angle as a direct control input, rather than a state variable, has a negligible impact on the optimal trajectory outcomes~\cite{18,2,3,16}. For a detailed justification, refer to \textbf{Appendix}\ \ref{app1}.

Under these assumptions, the cruise-phase dynamics are defined for all $t \in [0, t_f]$ as \cite{mythesis}:
\begin{equation}\label{eq1}
	\begin{split}
		&\frac{dx}{dt}=v\cos(\chi(t))+W_x(x,y)=:F_x,\\
		&\frac{dy}{dt}=v\sin(\chi(t))+W_y(x,y):=F_y,\\
		&\frac{dv}{dt}=\frac{\Pi(t) T_{max}(v,\bar{h})-D(m,v,\bar{h})}{m}:=F_v,\\
		&\frac{dm}{dt}=-\Pi(t) C_s(v,\bar{h})T_{max}(v,\bar{h}):=F_m,\\
		&L=mg,\\
		&x(0)=x_0,\quad y(0)=y_0,\quad v(0)=v_0,\quad m(0)=m_0,\\
		&x(t_f)=x_f,\quad y(t_f)=y_f,\quad v(t_f)=v_f.\\
	\end{split}
\end{equation}

The variables in the above equations are defined as follows: the coordinates $x$ and $y$ define a horizontal plane in which the aircraft's cruise flight occurs; $t$ represents time, with $t_f$ denoting the final time; $W_x$ and $W_y$ are the wind velocity components along the $x$ and $y$ directions, respectively; $v$ is the airspeed of the aircraft (relative to the wind); $\chi$ denotes the heading angle; $\Pi$ is the throttle setting; $T_{\text{max}}$ is the maximum available thrust; $D$ is the aerodynamic drag force; $m$ is the mass of the aircraft; $L$ is the lift force; $C_s$ is the specific fuel consumption; $\bar{h}$ is the constant altitude at which cruise flight occurs; and $g$ is the gravitational acceleration.

The parameters $T_{\text{max}}(v,\bar{h})$ and $C_s(v,\bar{h})$ are part of the propulsion model, while $D(m,v,\bar{h})$ characterizes the compressible aerodynamic drag polar. These functions are detailed in \textbf{Appendix} \ref{app2}.

The controls $\Pi(t)$ and $\chi(t)$, are subject to the following inequality constraints, $\forall t\in[0,t_f]$:
\begin{equation}\label{eq1a}
	\begin{split}
		&\chi_{min}\leq\chi(t)\leq\chi_{max},\qquad \Pi_{min}\leq\Pi(t)\leq\Pi_{max}.
	\end{split}
\end{equation}
Furthermore, the aircraft flight is typically constrained by Mach number ($M:=v(c_0\bar{R}\Theta)^{-\frac{1}{2}}$):
\begin{equation}\label{eq1b}
	\begin{split}
		&M_{min}\leq M(t)\leq M_{max}.
	\end{split}
\end{equation} 
 
Considering the dynamic and algebraic constraints, the objective is to minimize both total flight time and fuel consumption while avoiding designated flight-sensitive areas.

These sensitive areas are typically those with high probability of hazardous phenomena such as thunderstorms, and persistent contrails.
Integrating these factors, an objective function can be written as:
\begin{equation}\label{eq1cc}
	\begin{split}
		&\min_{\Pi(t),\chi(t),t_f}\mathcal{J}:=\mathbf{c}_tt_f+\mathbf{c}_mm_f+\int_{0}^{t_f}g(x,y)dt.
	\end{split}
\end{equation} 

where $g(x,y)$ is an imposed penalty function for the sensitive areas; and $\mathbf{c}_t$, and $\mathbf{c}_m$ are coefficients, representing the trade-off among the objective terms. 
\subsection{Wind Model}\label{subsec21}

In theory, atmospheric wind is perceived as a turbulent, incompressible flow governed by the Navier-Stokes equations. Under this framework, the momentum equations become highly complex, featuring eddies spanning a wide range of length scales, from synoptic (hundreds of kilometers) down to inertial subranges (millimeters and smaller). However, the incompressible continuity condition (commonly written as \(\nabla\cdot\mathbf{u}=0\)) retains exactly the same form in both laminar and turbulent regimes.

Because the velocity components are inherently coupled through the Navier–Stokes system, any wind‐field model that aims to be both physically consistent and data‐informed should go beyond purely empirical curve fits to historical or reanalysis data (e.g., ERA5). In particular, one must respect incompressibility and, at least approximately, the momentum balance in the inviscid (large‐scale) limit. To meet these requirements, we propose a composite inviscid wind model built as a superposition of elementary flow primitives: uniform flow, regularized point vortices, potential‐flow dipoles, and regularized point sources and sinks. Each primitive is an exact solution of the incompressible Euler equations (i.e., the Navier–Stokes equations with zero viscosity), so that the composite model automatically satisfies \(\nabla\cdot\mathbf{u}=0\) and yields inviscid solutions to the momentum equations, capturing the dominant large‐scale patterns of the flow.

Therefore, by constructing a divergence‐free, inviscid wind field from analytic flow primitives, we enforce the most fundamental physical constraints, namely incompressibility \(\nabla\cdot\mathbf{u}=0\) and momentum balance in the Euler limit. In addition, because each fitting parameter corresponds to a fluid‐mechanical feature, the model yields realistic large‐scale wind patterns with a small number of parameters. Consequently, stochastic simulations based on this composite inviscid model can represent wind variability in a more physically grounded manner and with fewer samples than purely data‐driven techniques (such as polynomial fitting, spline fitting and neural network operators) whose coefficients may lack an obvious connection to fluid-mechanical processes.
     

\subsubsection{Composite Wind‐Field Construction}
Let $(x,y)\in\mathbb{R}^2$.  We define the total wind‐field as:
\begin{equation}
	\mathbf W(x,y)=
	\bigl(W_x,\,W_y\bigr)=
	\underbrace{U_\infty\,\mathbf e_x + V_\infty\,\mathbf e_y}_{\text{uniform flow}}
	+
	\underbrace{\sum_{k=1}^{M_v}\mathbf W^{(v)}_k(x,y)}_{\text{vortex flow}}
    +
	\underbrace{\sum_{\ell=1}^{M_d}\mathbf W^{(d)}_\ell(x,y)}_{\text{dipole flow}}
	+
	\underbrace{\sum_{j=1}^{M_s}\mathbf W^{(s)}_j(x,y)}_{\text{source/sink flow}}.
	\label{eq:composite}
\end{equation}
where $M_v,M_d$, and $M_s$, are the number of introduced vortices, dipoles, and sources/sinks respectively. The primitives are specified as:
\begin{itemize}
	\item \emph{\textbf{Uniform flow:}} the potential associated with a uniform flow is given by:
	\begin{equation}
		\Phi^{(u)}(x,y)
		= U_\infty\,x + V_\infty\,y.
	\end{equation}
	from which we obtain the velocity field as:
	\begin{equation}
		\mathbf W^{(u)}(x,y)
		= \nabla \Phi^{(u)}
		= \bigl(U_\infty,\,V_\infty\bigr).
	\end{equation}
\item \emph{\textbf{Regularized point vortices:}} the stream function for each vortex of circulation $\Gamma_k$ at $(x_{0,k},\,y_{0,k})$ with core radius $R_{0,k}^{(v)}$ is:
\begin{equation}
	\Psi^{(v)}_k(x,y)
	= \frac{\Gamma_k}{4\pi}
	\ln\Big[r_k^2+ \bigl(R_{0,k}^{(v)}\bigr)^2\Big].
\end{equation}
from which the velocity field is obtained by:
\begin{equation}
\begin{split}
	&\mathbf W^{(v)}_k(x,y)
	= \nabla^\perp \Psi^{(v)}_k
	=\begin{pmatrix}
		-\partial_y \Psi^{(v)}_k\\[1ex]
		\phantom{-}\partial_x \Psi^{(v)}_k
	\end{pmatrix}
	= \frac{\Gamma_k}{2\pi}\,
	\frac{1}{r_k^2 + \bigl(R_{0,k}^{(v)}\bigr)^2}
	\begin{pmatrix}
		-\, (y - y_{0,k})\\[1ex]
		\;\; (x - x_{0,k})
	\end{pmatrix}.
\end{split}
\end{equation}
where $r_k^2 = (x - x_{v,k})^2 + (y - y_{v,k})^2$.
%
	\item \emph{\textbf{Regularized dipoles:}} each dipole of vector moment $\bm\mu_\ell=(\mu_{x,\ell},\,\mu_{y,\ell})$ at $(x_{d,\ell},\,y_{d,\ell})$ with regularization radius $R_{0,\ell}^{(d)}$ has potential:
	\begin{equation}
		\Phi_\ell(x,y)
		= \frac{\bm\mu_\ell\!\cdot\!\bigl(x - x_{d,\ell},\,y - y_{d,\ell}\bigr)}
		{2\pi\,\bigl[r_\ell^2 + \bigl(R_{0,\ell}^{(d)}\bigr)^2\bigr]}\,.
	\end{equation}
	from which we obtain the velocity field as:
	\begin{equation}
		\begin{split}
			&\mathbf W^{(d)}_\ell(x,y)
			= \nabla \Phi_\ell
			=\\
			 &\frac{1}{2\pi} \,
			\frac{1}{\bigl(r_\ell^2 + (R_{0,\ell}^{(d)})^2\bigr)^2}
			\begin{pmatrix}
				\mu_{x,\ell}\bigl[(x - x_{d,\ell})^2 - (y - y_{d,\ell})^2\bigr]
				+ 2\,\mu_{y,\ell}\,(x - x_{d,\ell})(y - y_{d,\ell})\\[1ex]
				\mu_{y,\ell}\bigl[(y - y_{d,\ell})^2 - (x - x_{d,\ell})^2\bigr]
				+ 2\,\mu_{x,\ell}\,(x - x_{d,\ell})(y - y_{d,\ell})
			\end{pmatrix}.
		\end{split}
	\end{equation}
	where $r_\ell^2 = (x - x_{d,\ell})^2 + (y - y_{d,\ell})^2$.	
	\item \emph{\textbf{Regularized point sources/sinks:}} each source (positive strength) or sink (negative strength) of net strength $Q_j$ at $(x_{s,j},\,y_{s,j})$ with core radius $R_{0,j}^{(s)}$ has potential:
	\begin{equation}
		\Phi^{(s)}_j(x,y)
		= \frac{Q_j}{4\pi}\,\ln\!\Bigl[r_j^2 + \bigl(R_{0,j}^{(s)}\bigr)^2\Bigr]\,.
	\end{equation}
	from which we obtain:
	\begin{equation}
		\mathbf W^{(s)}_j(x,y)
		= \nabla \Phi^{(s)}_j
		= \frac{Q_j}{2\pi}\,
		\frac{1}{r_j^2 + \bigl(R_{0,j}^{(s)}\bigr)^2}
		\begin{pmatrix}
			x - x_{s,j}\\[1ex]
			y - y_{s,j}
		\end{pmatrix}.
	\end{equation}
\end{itemize}
where $r_j^2 = (x - x_{s,j})^2 + (y - y_{s,j})^2$.

In each case, the regularization radius $R_{0,k}^{(v)}$, $R_{0,\ell}^{(d)}$, and $R_{0,j}^{(s)}$ (for vortices, dipoles, and sources/sinks respectively) prevents singular behavior at the core.  By construction, every component satisfies: 
\begin{equation}
	\nabla\!\cdot\,\mathbf W^{(u)} = 0,\quad
	\nabla\!\cdot\,\mathbf W^{(v)}_k = 0,\quad
	\nabla\!\cdot\,\mathbf W^{(d)}_\ell = 0,\quad \nabla\!\cdot\,\mathbf W^{(s)}_j = 0.
\end{equation}
Therefore, by linearity of divergence:
\begin{equation}
	\nabla\!\cdot\mathbf W(x,y)
	=\nabla\!\cdot\bigl(\mathbf W^{(u)}+\sum_k\mathbf W^{(v)}_k+\sum_\ell\mathbf W^{(d)}_\ell+\sum_j\mathbf W^{(s)}_j\bigr)
	=0.
\end{equation}
The proposed composite inviscid wind model is characterized by the following free parameters, which can be fine-tuned using the available wind data:
\begin{equation}
	\textbf{Parameter Count}=\underbrace{U_\infty,\,V_\infty}_{2}
	\;+\;
	\underbrace{4M_v}_{\Gamma_k,\,(x_{v,k},y_{v,k}),\,R^{(v)}_{0,k}}
	\;+\;
	\underbrace{5M_d}_{(\mu_{x,\ell},\,\mu_{y,\ell}),\,(x_{d,\ell},y_{d,\ell}),\, R^{(d)}_{0,\ell}}+\;
	\underbrace{4M_s}_{Q_j,\,(x_{s,j},y_{s,j}),\, R^{(s)}_{0,j}}.
\end{equation}

\subsection{The Model for Flight-Sensitive Areas}\label{subsec22}
The function \( g(x,y) \) in Eq. (\ref{eq1cc}) characterizes flight-sensitive areas. While \( g(x,y) \) can, in principle, approximate a full sensitivity map corresponding to various hazardous phenomena, in this work it is used to represent  high-probability hazardous zones for generality. Accordingly, \( g(x,y) \) typically attains its maximum at the geometric center of each flight-sensitive region and decays with spatial distance.

In this study, flight-sensitive areas are modeled as ellipses centered at the respective hazardous zones. The function \( g(x,y) \) is constructed to act as a soft penalty term with tunable intensity:
\footnote{Alternatively, hard constraints can be embedded directly into the Lagrangian, for example through \( g_i(x,y) := g_{h,i}(x,y) = \exp\!\left[k_i\left(1 - \|X_s - X_{sc,i}\|_{\mathbf{A}}\right) + d_i\right] \), where \( k_i = -d_i + c_i \), and \(\exp(d_i)\) and \(\exp(c_i)\) represent the penalty values at the ellipse perimeter and center, respectively. The elliptical geometry, defined by \(\|X_s - X_{sc,i}\|_{\mathbf{A}} \leq 1\), \( i = 1,\dots,n \), can equivalently be enforced as a set of inequality constraints, i.e., \(\mathcal{S}_{s,i}(X_s) \triangleq 1 - \|X_s - X_{sc,i}\|_{\mathbf{A}} \leq 0\), for \(i=1,\dots,n\), \(\forall t \in [0,t_f]\), where \(\mathcal{S}_{s,i}\) denotes the state inequality constraint associated with the \(i^{\text{th}}\) flight-sensitive area.}
\begin{equation}\label{eq6}
	\begin{split}
    &g(x,y):=\sum_{i=1}^{n}\mathbf{c}_{s,i}g_i(x,y),\quad g_i(x,y)\triangleq
g_{s,i}(x,y).
    \end{split}
\end{equation}
where:
\begin{equation}\label{eq6a}
	\begin{split}
		& g_{s,i}(x,y)=\frac{1}{{||X_s-X_{sc,i}||}_{\textbf{A}}},\quad X_s:=	
		\begin{pmatrix}
			x\\
			y		
		\end{pmatrix},\quad ||\bullet||_{\textbf{A}}:=\sqrt{(\bullet)^{\textbf{T}}\textbf{A}\,(\bullet)},\\
	 &\textbf{A}=\textbf{R}\textbf{D}
		{\textbf{R}}^{\textbf{T}},\quad \textbf{D}=\begin{pmatrix}
			\frac{1}{a_i^2}&0\\
			0&\frac{1}{b_i^2}		
		\end{pmatrix},\quad \textbf{R}=
		\begin{pmatrix}
			\cos(\alpha_i)&-\sin(\alpha_i)\\
			\sin(\alpha_i)&\cos(\alpha_i)		
		\end{pmatrix}.
	\end{split}
\end{equation}
In the above, \( g_s \) denotes the aggregated penalty term; \( c_{s,i} \) is the weight associated with the \( i^{\text{th}} \) flight-sensitive area; \( a_i \) and \( b_i \) are the semi-major and semi-minor axes, respectively; \( X_{sc,i} \) is the geometrical center; and \( \alpha_i \) denotes the orientation of the corresponding elliptical region.


\section{Turnpike Property Applied to Cruise Dynamics}\label{sec3}
%
The turnpike property describes a phenomenon in many OCPs where, for different initial conditions and varying time horizons, the solutions tend to converge to a neighborhood of a particular steady state solution and remain there for most of the time horizon \cite{turnpike1,turnpike2}. 

\begin{customassumption}[Monotonicity of the state]
	Let $(x,y,m,v)\colon [0,t_f]\to\mathbb{R}^4$ be of class \(\mathcal C^1\).  We assume:
	\begin{equation}
		\frac{dx}{dt}(t)>0,\quad \frac{dy}{dt}(t)>0,\quad \frac{dm}{dt}(t)<0,
		\qquad
		\forall\,t\in[0,t_f].
	\end{equation}
	Hence \(x\) and \(y\) are strictly increasing, \(m\) is strictly decreasing, and the only state–variable whose time–derivative may change sign is the speed \(v\).  It follows that \(v\) alone can exhibit a nontrivial turnpike behavior.
\end{customassumption}


To analyze the turnpike property with respect to \( v \), we first extract the stationary point by solving:
\begin{equation}\label{eq9}
	\begin{split}
		&\frac{dv}{dt}=0,\ \Rightarrow \Pi(t)=\frac{D(m,v)}{T_{max}(v)}.
	\end{split}
\end{equation} 
The stability of the stationary point can be analyzed using linear perturbation theory. However, since the aircraft speed and mass dynamics are coupled, the perturbation analysis must theoretically account for both variables, making the problem more complex. To simplify this, we assume that at the stationary point, the aircraft mass \( m \) remains constant, thereby reducing the analysis to the perturbed field associated only with the aircraft speed \( v \).
\begin{customassumption}\label{assumption0}
	Let \(m>0\) be fixed, and \(\Pi^*\in[0,1]\). Define:
	\begin{equation}
		F_v(v) \;=\; \frac{\Pi^*\,T_{\max}(v) - D(m,v)}{m}.
	\end{equation}
	such that \(F_v\) is of class \(\mathcal C^2\) in a neighbourhood of $v^*$ satisfying  \(\frac{\partial F_v}{\partial v}(v^*)<0\), where \(v^*>0\) is the unique solution of:
	\begin{equation}
		F_v(v^*) = 0
		\quad\Longleftrightarrow\quad
		\Pi^*\,T_{\max}(v^*) = D(m,v^*).
	\end{equation}
\end{customassumption}
\begin{theorem}[Local exponential turnpike for the aircraft speed]
	
	 Under Assumption \ref{assumption0}, there exists \(\delta>0\) and \(\mu>0\)
	such that for any solution \(v(t)\) of \(\frac{dv}{dt}=F_v(v)\) with
	\(|v(0)-v^*|\le\delta\), one has:
	\begin{equation}
		|v(t)-v^*|\;\le\;|v(0)-v^*|\,e^{-\mu t},
		\quad\forall t\ge0.
	\end{equation}
	In particular, \(v^*\) is locally exponentially asymptotically stable, implying that the optimal aircraft
	speed exhibits the turnpike property, i.e., for any \(\varepsilon>0\) there
	exists \(T_0>0\) such that for all sufficiently large horizon
	\(t_f\), the optimal trajectory satisfies
	\(|v(t)-v^*|\le\varepsilon\) for all
	\(t\in [T_0,\;t_f-T_0]\).
\end{theorem}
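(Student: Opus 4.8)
The plan is to prove the stated inequality as a textbook local exponential stability estimate for the hyperbolic sink $v^*$ of the scalar autonomous ODE $\frac{dv}{dt}=F_v(v)$, and then read off the turnpike clause as its dynamical interpretation. Writing $\eta(t):=v(t)-v^*$ and $a:=\frac{\partial F_v}{\partial v}(v^*)$, Assumption \ref{assumption0} gives $a<0$. Since $F_v(v^*)=0$, the perturbation satisfies $\dot\eta=F_v(v^*+\eta)=a\,\eta+R(\eta)$, where $R$ collects the nonlinear remainder, and the whole argument reduces to showing that this remainder cannot overturn the stabilizing linear term $a\,\eta$ inside a sufficiently small ball.

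First I would quantify $R$ using the $\mathcal C^2$ regularity. On a closed neighbourhood $[\,v^*-\rho,\;v^*+\rho\,]$, Taylor's theorem with Lagrange remainder yields $F_v(v^*+\eta)=a\,\eta+\tfrac12 F_v''(\xi)\,\eta^2$ for some $\xi$ between $v^*$ and $v^*+\eta$, so that $|R(\eta)|\le L\,\eta^2$ with $L:=\tfrac12\sup_{[\,v^*-\rho,\,v^*+\rho\,]}|F_v''|$. This is precisely where the $\mathcal C^2$ hypothesis is used: it provides a clean quadratic control of $R$ on a fixed neighbourhood, uniform in $t$.

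Next I would run a Lyapunov/comparison argument on $W(t):=\eta(t)^2$. Differentiating, $\dot W=2\eta\dot\eta=2a\,\eta^2+2\eta R(\eta)\le 2\eta^2\!\left(a+L|\eta|\right)$. Choosing $\delta:=\min\{\rho,\;|a|/(2L)\}$ guarantees that on $\{|\eta|\le\delta\}$ one has $a+L|\eta|\le a+|a|/2=-|a|/2$, whence $\dot W\le -|a|\,W\le 0$. The strict negativity of $\dot W$ on the sphere $|\eta|=\delta$ makes the ball $\{|\eta|\le\delta\}$ forward invariant — a solution starting with $|\eta(0)|\le\delta$ can never reach the boundary — which is exactly what keeps the trajectory inside the region where the quadratic bound is valid for \emph{all} $t\ge0$. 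Grönwall's inequality then gives $W(t)\le W(0)\,e^{-|a|t}$, i.e. $|\eta(t)|\le|\eta(0)|\,e^{-\mu t}$ with $\mu:=|a|/2>0$, which is the claimed estimate. I expect the only genuinely careful point in this part to be the invariance step, since without it the remainder bound would not be legitimately sustained over an unbounded time interval.

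Finally, for the \emph{In particular} clause I would note that the estimate establishes local exponential asymptotic stability of $v^*$, so any reduced-speed trajectory entering the $\delta$-ball is drawn to within any prescribed $\varepsilon$ of $v^*$ in a time $T_0$ depending only on $\mu$, $\delta$ and $\varepsilon$ — and hence independent of the horizon $t_f$. The main obstacle, and the step I would treat most delicately, is bridging this autonomous scalar statement to the genuinely two-point structure of the optimal problem: the inequality proved above controls only forward-time attraction from $v(0)$, whereas the turnpike claim requires pinning on a sub-interval $[T_0,\,t_f-T_0]$ bounded away from \emph{both} endpoints. Justifying the behaviour near $t_f$, where the terminal condition $v(t_f)=v_f$ is enforced, requires the companion hyperbolicity of the state–costate system (a saddle at $(v^*,\lambda_v^*)$ with a stable and an unstable direction), so that $v_f$ is approached along the unstable manifold in backward time and the uniform $T_0$ follows from the two exponential rates. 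I would make explicit that the scalar estimate above supplies the forward (stable-manifold) rate and only sketch the backward counterpart, since a fully rigorous treatment would invoke the linearized costate dynamics not developed in this reduced model.
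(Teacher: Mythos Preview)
Your argument is correct and follows essentially the same route as the paper: write the perturbation $\eta=v-v^*$, expand $F_v$ by Taylor's theorem around $v^*$, choose $\delta$ small enough that the quadratic remainder is dominated by half of the stabilizing linear term, and conclude exponential decay via a Gr\"onwall/comparison step, obtaining the same rate $\mu=|a|/2$. The only cosmetic differences are that the paper works directly with $|\varepsilon|$ (differentiating through a sign function) whereas you use the quadratic Lyapunov function $W=\eta^2$, and the paper bounds the remainder as $|r(\varepsilon)|\le\tfrac{|\lambda|}{2}|\varepsilon|$ via continuity of $F_v'$ while you bound it as $|R(\eta)|\le L\eta^2$ via a uniform bound on $F_v''$; these are equivalent implementations of the same idea. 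Your explicit treatment of forward invariance and your candid discussion of what the turnpike clause would additionally require (backward-time hyperbolicity near $t_f$) are in fact more careful than the paper, which simply stops after establishing local exponential stability and does not separately justify the two-sided interval $[T_0,t_f-T_0]$.
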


\begin{proof}
	Under Assumption \ref{assumption0}, \(F_v\in\mathcal C^2\) and \(\frac{\partial F_v}{\partial v}(v^*)<0\).  We set:
	\begin{equation}
		\lambda:=\frac{\partial F_v}{\partial v}(v^*)\;<0,\qquad \varepsilon(t):=v(t)-v^*.
	\end{equation}
	By Taylor’s theorem, for \(\lvert \varepsilon\rvert\) small:
	\begin{equation}
			F_v(v^*+\varepsilon)
		=\underbrace{F_v(v^*)}_{0}+\lambda\,\varepsilon+r(\varepsilon).
	\end{equation}
	where \(r(\varepsilon)=\tfrac12\,\frac{\partial^2 F_v}{\partial v^2}(\xi)\,\varepsilon^2\) for some \(\xi\) between \(v^*\) and \(v^*+\varepsilon\).  Since \(\frac{\partial F_v}{\partial v}\) is continuous at \(v^*\), there exists \(\delta>0\) such that:
	\begin{equation}
		\bigl\lvert r(\varepsilon)\bigr\rvert
		=\bigl\lvert F_v(v^*+\varepsilon)-F_v(v^*)-\lambda\,\varepsilon\bigr\rvert
		\;\le\;\frac{\lvert\lambda\rvert}{2}\,\lvert\varepsilon\rvert
		\quad\text{whenever }|\varepsilon|\le\delta.
	\end{equation}
	Hence any solution of \(\frac{dv}{dt}=F_v(v)\) with \(v(0)\in(v^*-\delta,v^*+\delta)\) satisfies:
	\begin{equation}
		\frac{d \varepsilon}{dt}
		=\lambda\,\varepsilon+r(\varepsilon),
		\quad
		\bigl\lvert r(\varepsilon)\bigr\rvert\le\frac{|\lambda|}{2}\lvert\varepsilon\rvert.
	\end{equation} 
	If \(\lvert\varepsilon(t)\rvert\le\delta\), then:
	\begin{equation}
			\frac{d}{dt}\lvert\varepsilon\rvert
		\;=\;\mathrm{sgn}(\varepsilon)\,\frac{d \varepsilon}{dt}
		\;\le\;\lvert\lambda\rvert\,\lvert\varepsilon\rvert-\bigl|\lambda\bigr|\tfrac12\,\lvert\varepsilon\rvert
		\;=\;\frac{\lambda}{2}\,\lvert\varepsilon\rvert.
	\end{equation}
	since \(\lambda<0\).  By Grönwall’s inequality:
	\begin{equation}
		\lvert\varepsilon(t)\rvert
		\;\le\;\lvert\varepsilon(0)\rvert\,e^{(\lambda/2)\,t}, 
		\qquad t\ge0.
	\end{equation}
	and \(\lvert\varepsilon(t)\rvert<\delta\) for all \(t\).  Therefore:
	\begin{equation}
		\bigl\lvert v(t)-v^*\bigr\rvert
		=\lvert\varepsilon(t)\rvert
		\;\le\;\lvert v(0)-v^*\rvert\,e^{(\lambda/2)\,t}.
	\end{equation}
	showing \(v^*\) is (locally) exponentially asymptotically stable.
\end{proof}
To validate Assumption \ref{assumption0}, we numerically evaluated 
\(\lambda := \frac{\partial F_v}{\partial v}(v^*)\) over a 
\(\Pi^*\!-\!m\) domain. The computations consistently gave
\(\frac{\partial F_v}{\partial v}(v^*) < 0\) across the entire domain, thereby confirming Assumption \ref{assumption0}.



\section{The Surrogate Optimization Framework}\label{sec4}
\begin{customassumption}[Quasi‐steady flight]\label{assumtion00}
	Let $0 = t_{0} < t_{1} < \cdots < t_{N} = t_{f}$
	be a partition of the time‐horizon with mesh size 
	\(\displaystyle \Delta t := \max_{0\le n<N}(t_{n+1}-t_n)\).  We assume the aircraft speed $v:[0,t_f]\to\mathbb{R}$
	is piecewise constant on this partition, i.e.:
\begin{equation}
	\Omega_n := [t_n, t_{n+1}), 
	\quad
	v(t) = v_n,\;t\in\Omega_n,\;n=0,\dots,N-1
	\;\;\Longrightarrow\;\;
	\frac{d v}{dt}(t) = 0,\;t\in (t_n,t_{n+1})\,.
\end{equation}
Moreover, in the continuous limit $\Delta t\to0$, the admissible speed control $v(\cdot)$ becomes any Lebesgue–measurable function $v\in L^\infty([0,t_f])$.
\end{customassumption}

%

Under Assumption \ref{assumtion00}, the surrogate OCP becomes:
\begin{equation}\label{eq1c}
	\begin{split}
		&\min_{v(t),\chi(t),t_f}\mathcal{J}:=\mathbf{c}_tt_f+\mathbf{c}_mm_f+\int_{0}^{t_f}\sum_{i=1}^{n}\mathbf{c}_{s,i}g_i(x,y) dt.
	\end{split}
\end{equation}

\ \ \ \textbf{s.t.},
\begin{equation}\label{eq1}
	\begin{split}
		&\textbf{1) State Dynamics:}\\
		&\frac{dx}{dt}=v(t)\cos(\chi(t))+W_x(x,y),\\
		&\frac{dy}{dt}=v(t)\sin(\chi(t))+W_y(x,y),\\
		&\frac{dm}{dt}=-D(m,v(t),\bar{h}) C_s(v(t),\bar{h}),\\
		&\textbf{2) Boundary Conditions:}\\
		&x(0)=x_0,\quad y(0)=y_0,\quad m(0)=m_0,\quad x(t_f)=x_f,\quad y(t_f)=y_f,\\
		&\textbf{3) Control Bounds:}\\
		&\chi_{min}\leq\chi({t})\leq\chi_{max},\quad M_{min}\big(c_0\bar{R}\Theta(\bar{h})\big)^{\frac{1}{2}}\leq {v}({t})\leq M_{max}\big(c_0\bar{R}\Theta(\bar{h})\big)^{\frac{1}{2}},\quad \forall t\in[0,t_f],\\
		&\textbf{4) Mixed-Inequality Constraints:}\\
		&\Pi_{min}\leq\Pi(t)\leq\Pi_{max},\quad \Pi:=\frac{D(m,v,\bar{h})}{T_{max}(v,\bar{h})},\quad \forall t\in[0,t_f].
	\end{split}
\end{equation}

\subsection{Compact Form Notation}\label{subsec41}  
We translate the above control problem into the following Mayer form:
\begin{equation}\label{eq32}
	\begin{split}
		&\min_{U(t),t_f}\mathcal{J}:=\Phi(X_f,t_f)\\
		&\textbf{s.t.},\\
		&\frac{dX}{dt}=F\bigl(X,U(t)\bigr),\\
		&\mathcal{C}\bigl(U(t)\bigr)\leq 0,\quad \mathcal{S}\bigl(X,U(t)\bigr)\leq 0,\quad \forall t\in[0,t_f],\\
		&\phi^T_0(X_0):=\bigl(x_0-x(0),y_0-y(0),m_0-m(0),z_0\bigr)=\vec{0},\\
		&\phi^T_f(X_f):=\bigl(x_f-x(t_f),y_f-y(t_f),0,0\bigr)=\vec{0}.
	\end{split}
\end{equation}

In the above, \(X:[0,t_f]\rightarrow\mathbb{R}^4\) represents the state vector defined as $X^T:=\bigl(x,y,m,z\bigr)$; \(X_0\) and \(X_f\) denote the initial and final conditions, respectively; \(U:[0,t_f]\rightarrow\mathbb{R}^2\) denotes the control vector defined as $U^T:=\bigl(v(t),\chi(t)\bigr)$; \(F\bigl(X,U(t)\bigr):\mathbb{R}^4\times \mathbb{R}^2\rightarrow\mathbb{R}^4\) represents the dynamic vector defined as $F^T:=\bigl(F_x,F_y,F_m,F_z\bigr)$,
where $\frac{dz}{dt}=:F_z=\sum_{i=1}^{n}\mathbf{c}_{s,i}\,g_{s,i}(x,y)$;
and \(\Phi: \mathbb{R}^4\times \mathbb{R}\rightarrow \mathbb{R}\) is the objective function in Mayer format defined as $\Phi=\mathbf{c}_{t}\,t_f+\mathbf{c}_{m}\,m_f+z_f,$
where $z_f=\int_{0}^{t_f}\sum_{i=1}^{n}\mathbf{c}_{s,i}\,g_{s,i}(x,y)\,dt$. Moreover, \(\mathcal{C}\bigl(U\bigr): \mathbb{R}^2\rightarrow\mathbb{R}^{4}\) and \(\mathcal{S}\bigl(X,U(t)\bigr):\mathbb{R}^4\times\mathbb{R}^2\rightarrow\mathbb{R}^2\) are column vectors representing inequality constraints.

Under the assumption of normality (homogeneity) and using the direct-adjoining method (see \cite{26}, \cite{27}), we define the pseudo-Hamiltonian as:
\begin{equation}\label{eq34}
	\begin{split}
		\mathcal{H}(X,U(t),\lambda,\eta):=\langle\lambda,F\bigl(X,U(t)\bigr)\rangle+\langle\eta,\mathcal{S}(X,U(t))\rangle.
	\end{split}
\end{equation}
Here, \(\lambda:[0,t_f]\rightarrow\mathbb{R}^4\) represents the co-state vector defined as $\lambda^T:=\bigl(\lambda_{x},\lambda_{y},\lambda_{m},\lambda_{z}\bigr)$, and \(\eta:[0,t_f]\rightarrow\mathbb{R}^2\) stands for time-dependent multipliers associated with the state-inequality constraints.     

The minimization conditions \(\forall t\in[0,t_f]\) read:
\begin{equation}\label{eq35}
	\begin{split}
		\mathcal{H}(X^*,U^*(t),\lambda^*,\eta^*)&=\min_{\mathcal{C}(U)\leq0}\mathcal{H}(X^*,U(t),\lambda^*,\eta^*),\\[1mm]
		\langle\eta^*,\mathcal{S}(X,U^*(t))\rangle&=0,\quad \eta^*\geq0.
	\end{split}
\end{equation}

The co-state dynamics and the transversality conditions are:
\begin{equation}\label{eq36}
	\begin{split}
		\frac{d{\lambda}^T}{dt}&=-\frac{\partial\mathcal{H}}{\partial {X}},\quad \lambda_0:=\lambda(0)=-\Bigl(\frac{\partial{\phi}_0}{\partial {X}_0}\Bigr)^T\nu_0,\quad \lambda_f:=\lambda(t_f)=\frac{\partial\Phi}{\partial X_f}+\Bigl(\frac{\partial{\phi}_f}{\partial {X}_f}\Bigr)^T\nu_f,\\[2mm]
		\mathcal{H}_{f}&:=\mathcal{H}(t_f)=-\frac{\partial\Phi}{\partial t_f}-\nu_f^T\frac{\partial{\phi}_f}{\partial t_f}.
	\end{split}
\end{equation}

Let \(\tau\) be a possible time instant within the mixed-boundary arc at which the co-state variables are discontinuous. Therefore, from \cite{27}, we have:
\begin{equation}\label{eq37}
	\begin{split}
		\lambda^T(\tau^{-})&=\lambda^T(\tau^{+})-\nu^T(\tau)\frac{\partial \mathcal{S}}{\partial X}\Big|_{t=\tau},\quad \nu(\tau)\geq0,\quad \langle\nu(\tau),\mathcal{S}\bigl(X,U(\tau)\bigr)\rangle=0,\\[1mm]
		\mathcal{H}(\tau^-)&=\mathcal{H}(\tau^+).
	\end{split}
\end{equation}

\subsection{The Optimal Heading Angle, \(\chi(t)\)}
From the Hamiltonian in Eq. (\ref{eq35}), the optimal heading angle for the interior arc is obtained through: 
\begin{equation}\label{eq411}
	\begin{split}
		\frac{\partial \mathcal{H}}{\partial \chi}&=0\Rightarrow \tan\bigl(\chi(t)\bigr)=\frac{\lambda_{y}}{\lambda_{x}}\Rightarrow \frac{d\chi}{dt}\Bigl(1+\tan^2(\chi)\Bigr)=\frac{\frac{d\lambda_{y}}{dt}\lambda_{x}-\frac{d\lambda_{x}}{dt}\lambda_{y}}{\lambda_{x}^2}.
	\end{split}
\end{equation}  

From the co-state dynamics, we have: 
\begin{equation}\label{eq42}
	\begin{split}
		\frac{d\lambda_{x}}{dt}&=-\frac{\partial \mathcal{H}}{\partial x}=-\lambda_{z}\frac{\partial }{\partial x}\sum_{i=1}^{n}\mathbf{c}_{s,i}\,g_{s,i}(x,y)-\Bigl(\lambda_{x}\frac{\partial W_x}{\partial x}+\lambda_{y}\frac{\partial W_y}{\partial x}\Bigr),\\[2mm]
		\frac{d\lambda_{y}}{dt}&=-\frac{\partial \mathcal{H}}{\partial y}=-\lambda_{z}\frac{\partial }{\partial y}\sum_{i=1}^{n}\mathbf{c}_{s,i}\,g_{s,i}(x,y)-\Bigl(\lambda_{x}\frac{\partial W_x}{\partial y}+\lambda_{y}\frac{\partial W_y}{\partial y}\Bigr),\\[2mm]
		\frac{d\lambda_{z}}{dt}&=-\frac{\partial \mathcal{H}}{\partial z}=0,\Rightarrow \lambda_{z}(t)=\lambda_{z}(t_f)=\frac{\partial \Phi}{\partial z_f}=1,\quad \forall t\in[0,t_f].
	\end{split}
\end{equation} 

Notably, the co-state dynamics described above do not incorporate the multipliers \(\eta\), as the state-inequality constraints (i.e., Mach-inequality constraints) depend exclusively on the aircraft speed, \(v\).

Plugging Eq. (\ref{eq42}) into Eq. (\ref{eq411}), we obtain:
\begin{equation}\label{eq43}
	\begin{split}
		&\frac{dq}{dt}=\Biggl(-\frac{\partial W_x}{\partial y}+\Bigl(\frac{\partial W_x}{\partial x}-\frac{\partial W_y}{\partial y}\Bigr)q+\Bigl(\frac{\partial W_y}{\partial x}\Bigr)q^2\Biggr)+\\
		&\frac{1}{\lambda_{x}}\Biggl(q\frac{\partial}{\partial x}\sum_{i=1}^{n}\mathbf{c}_{s,i}\,g_{s,i}(x,y)-\frac{\partial}{\partial y}\sum_{i=1}^{n}\mathbf{c}_{s,i}\,g_{s,i}(x,y)\Biggr),\quad q:=\tan\bigl(\chi(t)\bigr).    
	\end{split}
\end{equation}
We observe that the optimal heading angle remains the same as that of the steady-state problem. Specifically, Eq. (\ref{eq43}) delineates the optimal \(\chi(t)\) for the interior arc, even with active state-inequality constraints. Furthermore, together with the assumption of continuity, any (possible) boundary arc for \(\chi(t)\) can be computed straightforwardly.

\subsection{The Optimal Aircraft Speed ${v}({t})$}\label{subsec44}
Let us initially suppose the mixed-inequality constraints are inactive, i.e., $\eta=0$, $\forall {t} \in [0, {t}_f]$. Since the Hamiltonian is autonomous, we write: 
\begin{equation}\label{eq41b}
	\begin{split}
		&\mathcal{H}(t):=\lambda_xF_x+\lambda_yF_y+\lambda_mF_m+F_z=\mathcal{H}(t_f)=-\mathbf{c}_t,\quad \forall t\in [0,t_f].
	\end{split}
\end{equation} 
Therefore, the optimal aircraft speed can be obtained as:
\begin{equation}\label{eq41a}
	\begin{split}
		&\frac{\partial \mathcal{H}}{\partial {v}}=0\Rightarrow \lambda_x\frac{\partial F_x}{\partial \tilde{v}}+\lambda_y\frac{\partial F_y}{\partial \tilde{v}}+\lambda_m\frac{\partial F_m}{\partial \tilde{v}}+\cancelto{0}{\frac{\partial F_z}{\partial \tilde{v}}}=0.
	\end{split}
\end{equation} 

On using $\lambda_y=\tan\big(\chi(\tilde{t})\big)\lambda_x$ in the above equation, we can write:
\begin{equation}\label{eq41c}
	\begin{split}
		&\lambda_m=\frac{-\mathbf{c}_t-F_z-\lambda_xF_x-\lambda_xF_y\tan\big(\chi(\tilde{t})\big)}{F_m}.
	\end{split}
\end{equation} 
Therefore, Eq. (\ref{eq41a}) becomes:
\begin{equation}\label{eq41d}
	\begin{split}
		&\lambda_x\frac{\partial F_x}{\partial {v}}+\lambda_x\tan\big(\chi({t})\big)\frac{\partial F_y}{\partial {v}}-\bigg(\mathbf{c}_t+F_z+\lambda_xF_x+\lambda_xF_y\tan\big(\chi({t})\big)\bigg)\frac{1}{F_m}\frac{\partial F_m}{\partial {v}}=0.
	\end{split}
\end{equation}
where:
\begin{equation}\label{eq41}
	\begin{split}
		&\frac{\partial F_m}{\partial {v}}=-\left(\frac{\partial C_s}{\partial{v}}D+\frac{\partial D}{\partial{v}}C_s\right),\quad \frac{\partial F_x}{\partial {v}}=\cos(\chi({t})),\quad \frac{\partial F_y}{\partial {v}}=\sin(\chi({t})).
	\end{split}
\end{equation}

\subsubsection{Special Cases}
\subsubsection*{A) Minimum Fuel Problem:} 
Let us consider a scenario where $\mathbf{c}_t = F_z = 0$. In this case, Eq. (\ref{eq41d}) becomes:
\begin{equation}\label{eq42d}
	\begin{split}
		&\frac{\partial \mathcal{H}}{\partial {v}}(t)=\lambda_x\frac{\partial F_x}{\partial {v}}+\lambda_x\tan\big(\chi({t})\big)\frac{\partial F_y}{\partial {v}}-\bigg(\lambda_xF_x+\lambda_xF_y\tan\big(\chi({t})\big)\bigg)\frac{1}{F_m}\frac{\partial F_m}{\partial {v}}=0.
	\end{split}
\end{equation} 
The above equation can be simplified as:
\begin{equation}\label{eq43d}
	\begin{split}
		&\frac{\partial \mathcal{H}}{\partial {v}}(t)=\frac{\lambda_x}{\cos\big(\chi(t)\big)}-\bigg(\frac{v\lambda_x}{\cos\big(\chi(t)\big)}+\lambda_{x}\big(W_x+\tan\big(\chi(t)\big)W_y\big)\bigg)\frac{F_{m,v}}{F_m}=\\
		&\frac{\lambda_x}{\cos\big(\chi(t)\big)}-P(t)\frac{F_{m,v}}{F_m}=0.
	\end{split}
\end{equation}
where $P(t):=\frac{\lambda_x}{\cos\big(\chi(t)\big)}\bigg(v+\big(W_x\cos\big(\chi(t)\big)+\sin\big(\chi(t)\big)W_y\big)\bigg),\ \forall t\in[0,t_f]$.\\
\textbf{Definitions:} $F_{m,v}:=\frac{\partial F_m}{\partial {v}}$, $F_{m,vv}:=\frac{\partial^2F_m}{\partial v^2}$, $F_{m,m}:=\frac{\partial F_m}{\partial m}$.
\begin{customassumption}[Monotonicity of Fuel Flow and Drag]\label{assumption1}
	Let \( C_s (v) : \mathbb{R} \to \mathbb{R}_{>0} \) and \( D(m,v) : \mathbb{R} \times \mathbb{R} \to \mathbb{R}_{>0} \) be such that \( C_s(v) \) is non-decreasing in \( v \), and \( D(m,v) \) is non-decreasing in both \( v \) and \( m \).
\end{customassumption}

\begin{customassumption}[]\label{assumption2}
	$F_{m,vv}<0$. 
\end{customassumption}
\begin{customassumption}[Boundedness of the Optimal Speed]\label{assumption4}
	For each \(t\in[0,t_f]\) the first‑order optimality condition $\frac{\partial \mathcal{H}}{\partial v}(t)\;=\;0$ can be written in the form:
\begin{equation}\label{eq49}
	G\bigl(m(t),\,v^*(t),\,W_x,\,W_y,\,\bar h\bigr)\;=\;0.
\end{equation}
	and that this equation implicitly defines a unique mapping:
	\begin{equation}\label{eq50a}
		v^*(t) \;=\; f\bigl(m(t),\,W_x,\,W_y,\,\bar h\bigr).
	\end{equation}
	Then, the resulting control is bounded in \((v_{\min},\,v_{\max})\), i.e. $v_{\min} \;<\; v^*(t) \;<\; v_{\max},\ \forall\,t\in[0,t_f]$. 
\end{customassumption}

\begin{theorem}[Strict Convexity of the Optimal Speed $v^*(t)$]\label{thm}
Under Assumptions \ref{assumption1}, \ref{assumption2}, and \ref{assumption4} the minimizer $v^*(t)$ of the Hamiltonian for the minimum fuel problem:
\begin{equation}
	\min_{v(t)\in[v_{min},v_{max}],\chi(t),t_f}\mathcal{J}:=-m_f,
\end{equation}
satisfies $v_{min} < v^*(t) < v_{max}, \forall\,t\in[0,t_f]$, and is strictly convex. 
\end{theorem}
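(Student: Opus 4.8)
The plan is to treat the two claims separately, since the bound $v_{min} < v^*(t) < v_{max}$ is essentially the content of Assumption \ref{assumption4}. Because the optimal speed is asserted to lie strictly inside the admissible interval, the Mach-type state-inequality constraints are inactive, so $\eta \equiv 0$ and the unconstrained stationarity condition $\partial\mathcal{H}/\partial v = 0$, i.e. Eq. (\ref{eq43d}), holds pointwise. I would record this reduction first, as it is precisely what licenses working with the interior first-order condition throughout the rest of the argument.

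For the convexity claim, I would first recast Eq. (\ref{eq43d}) as an implicit equation in $(m,v)$. Cancelling the factor $\lambda_x/\cos(\chi)$ (nonzero on the interior arc, since $\tan\chi = \lambda_y/\lambda_x$ is finite) and multiplying through by $F_m\neq 0$ gives $\tilde G(m,v) := G_s\,F_{m,v} - F_m = 0$, where $G_s := v + W_x\cos\chi + W_y\sin\chi$. The key preliminary computation is $\partial_v \tilde G = G_s\,F_{m,vv}$, the two $F_{m,v}$ contributions cancelling. The optimality identity itself forces $G_s = F_m/F_{m,v} > 0$, since $F_m<0$ and, by Assumption \ref{assumption1} together with Eq. (\ref{eq41}), $F_{m,v} = -(D_v C_s + D C_{s,v}) < 0$; hence Assumption \ref{assumption2} ($F_{m,vv}<0$) yields $\partial_v\tilde G < 0$. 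This nonvanishing partial is exactly what makes the implicit map $v^* = f(m, W_x, W_y, \bar h)$ of Assumption \ref{assumption4} well-defined and $\mathcal{C}^2$, and it fixes the sign of the denominator in all subsequent implicit-differentiation formulas.

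Next, treating $W_x, W_y, \bar h$ as constant parameters (so that the only time-variation of $v^*$ enters through $m(t)$, the regime in which the stated assumptions, which constrain only the $m$- and $v$-dependence of $D$ and $C_s$, can control the curvature), I would differentiate the identity $\tilde G(m(t), v^*(t)) = 0$ twice in $t$. The first differentiation gives $\dot v^* = -(\tilde G_m/\tilde G_v)\,\dot m$ with $\dot m = F_m < 0$; the second gives $\tilde G_v\,\ddot v^* = -\big(\tilde G_{mm}\dot m^2 + 2\tilde G_{mv}\dot m\,\dot v^* + \tilde G_{vv}\dot v^{*2} + \tilde G_m\,\ddot m\big)$, where $\ddot m = F_{m,m}\dot m + F_{m,v}\dot v^*$. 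Since $\tilde G_v < 0$, proving $\ddot v^* > 0$ reduces to showing the bracketed quantity is strictly positive.

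The hard part is precisely this last sign determination. The bracket involves $\tilde G_{mm}, \tilde G_{mv}, \tilde G_{vv}$, hence third derivatives of $F_m$ (e.g. $F_{m,vvv}, F_{m,vvm}$) whose signs are not delivered directly by Assumptions \ref{assumption1}--\ref{assumption2}. My strategy would be to repeatedly substitute the optimality identity $G_s F_{m,v} = F_m$ to eliminate the third-order terms, collapsing the expression into a combination of $F_{m,v}$, $F_{m,m}$, and $F_{m,vv}$ (equivalently of $D_v, D_m, C_{s,v} \ge 0$ and $F_{m,vv}<0$), whose joint sign is then forced by the monotonicity of Assumption \ref{assumption1} and the strict concavity of Assumption \ref{assumption2}. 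Should the first-order monotonicities alone fail to close the sign, the fallback is to invoke the explicit compressible drag-polar and fuel-flow forms of $D$ and $C_s$ specified in the Appendix to verify the residual second-order inequality. Finally, I would remark that with genuinely space-varying wind the terms $\dot W = W_{x,x}\dot x + W_{x,y}\dot y + \dots$ are indefinite, so the convexity statement is to be understood for the mass-driven component of $v^*$, consistent with the scope of the three assumptions.
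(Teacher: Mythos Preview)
You have misread what ``strictly convex'' is asserting. The paper is not claiming that the map $t\mapsto v^*(t)$ is a strictly convex function of time; it is claiming that the Hamiltonian is strictly convex in $v$ at the interior stationary point, i.e.\ that $\partial\mathcal H/\partial v=0$ implies $\partial^2\mathcal H/\partial v^2>0$, so that the critical point is genuinely a (local) minimizer. The paper's proof is accordingly short: differentiate Eq.~(\ref{eq43d}) once more in $v$, use the first-order condition to simplify, and obtain $\partial^2_v\mathcal H=-P(t)\,F_{m,vv}/F_m$. Since $F_m<0$ and $F_{m,vv}<0$ (Assumption~\ref{assumption2}), the sign of $\partial^2_v\mathcal H$ is that of $-P(t)$, and the paper establishes $P(t)<0$ by solving the linear costate ODE $\dot\lambda_m=-\lambda_m F_{m,m}$ backward from $\lambda_m(t_f)=-1$ (Assumption~\ref{assumption1} giving $F_{m,m}<0$), which forces $\lambda_m(t)<0$ and hence $P=-\lambda_m F_m<0$ via the Hamiltonian identity $\mathcal H\equiv 0$.

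Your programme of twice differentiating the implicit relation $\tilde G(m(t),v^*(t))=0$ in $t$ to get $\ddot v^*>0$ is therefore aimed at the wrong target, and, as you correctly diagnose, it would drag in $F_{m,vvv}$, $F_{m,vvm}$, etc., none of which are controlled by Assumptions~\ref{assumption1}--\ref{assumption2}. There is no reason to expect these higher-order cancellations to materialize; the paper never needs them. It is worth noting that you already have the essential computation in hand: your identity $\partial_v\tilde G=G_s\,F_{m,vv}$, combined with $\tilde G=0\Leftrightarrow G_s=F_m/F_{m,v}>0$, is precisely (up to the factor $\lambda_x/(\cos\chi\,F_m)$) the paper's formula for $\partial^2_v\mathcal H$ at the critical point. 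What you are missing is the sign of that remaining factor, which the paper fixes via the $\lambda_m<0$ argument. Redirect your proof to the second-order Legendre--Clebsch condition and add that costate step, and you are done.
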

\begin{proof}
	We show that $\frac{\partial \mathcal{H}}{\partial v}(t) = 0 \;\Longrightarrow\; \frac{\partial^2 \mathcal{H}}{\partial v^2}(t) > 0,\ \forall t \in [0,t_f]$. From Eq. (\ref{eq42d}), we obtain:
	\begin{equation}\label{eq50}
	\frac{\partial^2 \mathcal{H}}{\partial {v^2}}(t)=-\bigg(\frac{\lambda_{x}}{\cos\big(\chi(t)\big)}\bigg)\frac{F_{m,v}}{F_m}-P(t)\frac{F_{m,vv}F_m-F_{m,v}^2}{F_m^2}.
    \end{equation}
 Plugging Eq. (\ref{eq43d}) into Eq. (\ref{eq50}), we obtain:
	\begin{equation}\label{eq51}
	\frac{\partial^2 \mathcal{H}}{\partial {v^2}}(t)=-P(t)\frac{F_{m,vv}}{F_m}.
   \end{equation}
On the other hand, from the Hamiltonian, we have:
\begin{equation}\label{eq52}
	\begin{split}
		&\mathcal{H}(t):=\lambda_xF_x+\lambda_x\tan\big(\chi(t)\big)F_y+\lambda_mF_m=P(t)+\lambda_mF_m=0,\quad \forall t\in [0,t_f].
	\end{split}
\end{equation} 
From:
\begin{equation}\label{eq54}
	\frac{d\lambda_m}{dt}
	= -\frac{\partial\mathcal H}{\partial m}
	= -\,\lambda_m\,F_{m,m},\quad \lambda_{m}(t_f)=\frac{\partial [-m_f]}{\partial m_f}=-1.
\end{equation}
and since \(F_m=-C_s(v)\,D(m,v)<0\), we have: $-F_{m,m}
= C_s(v)\,\frac{\partial D}{\partial m}
=:a(t)$, where by Assumption \ref{assumption1}, \(a(t)>0\) for all \(t\). Thus: $\frac{d\lambda_m}{dt} = a(t)\,\lambda_m,\
\lambda_m(t_f)=-1.$
The general solution of Eq. (\ref{eq54}) is:
\begin{equation}
	\lambda_m(t)
	= \lambda_m(t_f)\,\exp\!\Bigl(-\!\int_{t}^{t_f}a(s)\,ds\Bigr)
	= -\exp\!\Bigl(-\!\int_{t}^{t_f}a(s)\,ds\Bigr).
\end{equation}
Since $\lambda_m(t)<0,\ \forall\,t\in[0,t_f]$, we have: $\operatorname{sgn}\bigl(\lambda_mF_m\bigr)
=+1$, and thus, from Eq. (\ref{eq52}), $\operatorname{sgn}\big(P(t)\big)<0$.

From \eqref{eq51}, since \(F_m<0\) and we showed \(P<0\), it follows that: $\operatorname{sgn}\!\Bigl(\frac{\partial^2 \mathcal{H}}{\partial v^2}(t)\Bigr)
=\operatorname{sgn}\bigl(F_{m,vv}\bigr)$, where by Assumption \ref{assumption2}:
\begin{equation}
	\frac{\partial^2 \mathcal{H}}{\partial v^2}(t)>0,
	\quad \forall\,t\text{ such that }\frac{\partial \mathcal{H}}{\partial v}(t)=0.
\end{equation}
Therefore the extremum is a local minimum of the Hamiltonian, completing the proof.
\end{proof}
\begin{theorem}\label{theo4}
	Let $0\le t\le t_f$ and suppose: $\frac{d\lambda_m}{dt}(t)=a(t)\,\lambda_m(t),\
	\lambda_m(t_f)=-1$
	on the Banach space $\mathbb{R}$ with norm $\|\cdot\|=|\cdot|$, where 
	$a\in L^\infty([0,t_f])$ and $0 < a_{\min} \le a(t)\le a_{\max}$. Then the unique mild (and classical) solution $\lambda_m(\cdot)$ satisfies, for every $t\in[0,t_f]$:
	\begin{equation}
		\lambda_m(t)
		\;=\;
		-\exp\!\Bigl(-\!\!\int_{t}^{t_f}a(s)\,ds\Bigr)
	\end{equation}
	and in particular:
	\begin{equation}
		-1\le\;
		\lambda_m(t)
		\;\le\;
		-\,\exp\bigl(-a_{\max}t_f\bigr).
	\end{equation}
\end{theorem}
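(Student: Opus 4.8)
The statement combines a standard linear-ODE solution formula with elementary monotonicity bounds, so the plan is to first pin down the (mild/classical) solution rigorously despite the coefficient being only essentially bounded, and then read the estimates off the closed form. First I would recast the terminal-value problem as a Volterra integral equation by integrating the ODE backward from $t_f$:
\[
\lambda_m(t) = -1 - \int_t^{t_f} a(s)\,\lambda_m(s)\,ds.
\]
Because $a\in L^\infty([0,t_f])$, the map $\lambda_m\mapsto -1-\int_t^{t_f}a\,\lambda_m$ is a uniform contraction on $C([0,t_f])$ in a Bielecki-weighted norm, or equivalently Carath\'eodory's existence--uniqueness theorem applies, since $f(t,\lambda)=a(t)\lambda$ is measurable in $t$ and globally Lipschitz in $\lambda$ with constant $\le a_{\max}$. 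This yields a unique absolutely continuous mild solution satisfying $\dot\lambda_m=a(t)\lambda_m$ for almost every $t$; here ``classical'' must be read in the a.e.\ sense, and the scalar state space $\mathbb{R}$ makes the evolution-family framing trivial (the generator is multiplication by the scalar $a(t)$).

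Next I would verify the closed form directly. Setting $A(t):=\int_t^{t_f}a(s)\,ds$, the bound $a\in L^\infty$ makes $A$ Lipschitz, hence absolutely continuous, with $A'(t)=-a(t)$ for a.e.\ $t$. Differentiating $-e^{-A(t)}$ by the chain rule gives $\tfrac{d}{dt}\bigl(-e^{-A(t)}\bigr)=-e^{-A(t)}\cdot(-A'(t))=a(t)\bigl(-e^{-A(t)}\bigr)$ a.e., while $A(t_f)=0$ forces the terminal value $-e^{0}=-1$. By the uniqueness just established, $\lambda_m(t)=-e^{-A(t)}$ is the solution, which is the first display.

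Finally, the two-sided bound follows from monotonicity of the exponential applied to an estimate on $A(t)$. From $0<a_{\min}\le a(s)\le a_{\max}$ and $0\le t\le t_f$ one has $0\le A(t)\le a_{\max}(t_f-t)\le a_{\max}t_f$. Since $u\mapsto -e^{-u}$ is increasing, $A(t)\ge 0$ gives $\lambda_m(t)=-e^{-A(t)}\ge -e^{0}=-1$, and $A(t)\le a_{\max}t_f$ gives $\lambda_m(t)\le -e^{-a_{\max}t_f}$, establishing the claim.

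\textbf{Main obstacle.} There is no deep difficulty; the only genuinely delicate point is the low regularity of $a$. Because $a$ is merely $L^\infty$, the solution is absolutely continuous rather than $C^1$, so existence and uniqueness must be routed through the integral (mild) formulation and the ODE interpreted almost everywhere; once this is in place the closed form and the bounds are direct computations. I would also remark that the hypothesis $a_{\min}>0$ is stronger than needed for the lower bound, where $a\ge 0$ already suffices, whereas the upper bound genuinely uses $a\le a_{\max}$ together with $t\ge 0$.
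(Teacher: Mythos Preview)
Your proof is correct and follows essentially the same core idea as the paper: write down the explicit solution $\lambda_m(t)=-\exp\bigl(-\int_t^{t_f}a\bigr)$, bound the exponent, and read off the two-sided inequality. Two minor differences are worth noting. First, you treat existence and uniqueness carefully via the Carath\'eodory/Volterra framework to accommodate $a\in L^\infty$, whereas the paper simply states the closed form; your treatment is the more rigorous one and is exactly what the phrase ``mild (and classical) solution'' in the statement invites. Second, your derivation of the bounds is more direct: you bound $A(t)=\int_t^{t_f}a$ by $0$ and $a_{\max}t_f$ and apply monotonicity of $u\mapsto -e^{-u}$ in one stroke. The paper instead first records the pointwise bound $-e^{-a_{\min}(t_f-t)}\le\lambda_m(t)\le -e^{-a_{\max}(t_f-t)}$, then argues separately that $\lambda_m$ is nonincreasing (from $\dot\lambda_m=a\lambda_m<0$) to squeeze $\lambda_m(t)$ between $\lambda_m(t_f)=-1$ and $\lambda_m(0)$, and finally evaluates the pointwise bound at $t=0$. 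That monotonicity detour is in fact redundant, since $0\le a_{\min}(t_f-t)$ and $a_{\max}(t_f-t)\le a_{\max}t_f$ already give the stated inequality directly from the pointwise bound; your route avoids this extra step.
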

\begin{proof}
	The solution of $\lambda_{m}(t)$ reads $\lambda_m(t)
	=\exp\!\Bigl(\!\int_{t_f}^{t}a(s)\,ds\Bigr)\,\lambda_m(t_f)
	=-\exp\!\Bigl(-\!\int_{t}^{t_f}a(s)\,ds\Bigr)$. Since \(a_{\min}(t_f-t)\le\int_t^{t_f}a(s)\,ds\le a_{\max}(t_f-t)\),
	exponentiation and multiplication by \(-1\) yield:
	\begin{equation}\label{eq62a}
	-\,\exp\!\bigl(-a_{\min}(t_f - t)\bigr)
	\;\le\;
	\lambda_m(t)
	\;\le\;
	-\,\exp\!\bigl(-a_{\max}(t_f - t)\bigr).
    \end{equation}
	Under the hypotheses of the Theorem, since $		\frac{d\lambda_m}{dt}(t)
	= a(t)\,\lambda_m(t),\ a(t)>0,\ \lambda_m(t)<0$,
	we get $\frac{d\lambda_m}{dt}(t)\le0$, so $\lambda_m(\cdot)$ is decreasing on $[0,t_f]$.  Consequently, for every $t\in[0,t_f]$:
	\begin{equation}\label{eq63a}
		\lambda_m(t_f)
		\;\le\;
		\lambda_m(t)
		\;\le\;
		\lambda_m(0),\ \Rightarrow\ -1
		\;\le\;
		\lambda_m(t)
		\;\le\;
		\lambda_m(0). 
	\end{equation}
	and by Eq. (\ref{eq62a}), at $t=0$:
	\begin{equation}\label{eq64}
		-\,\exp\bigl(-a_{\min}t_f\bigr)
		\;\le\;
		\lambda_m(0)
		\;\le\;
		-\,\exp\bigl(-a_{\max}t_f\bigr).
	\end{equation}
Therefore, using Eq. (\ref{eq63a}) and Eq. (\ref{eq64}), we get:
	\begin{equation}\label{eq65}
	-1\le\;
	\lambda_m(t)
	\;\le\;
	-\,\exp\bigl(-a_{\max}t_f\bigr).
\end{equation}
\end{proof}
\subsubsection*{B) Minimum Time Problem:}
Let us consider the scenario where $\mathbf{c}_m = F_z = 0$. In this setting, the objective function is independent of the aircraft’s mass dynamics. Particularly, the objective function is:
\begin{equation}
	\min_{v(t) \in [v_{\min}, v_{\max}],\, \chi(t),\, t_f} \mathcal{J} := t_f.
\end{equation}
Since the mass evolution is decoupled from the objective, it may be excluded from the optimization. Therefore, the minimum-time problem can be solved using only the kinematic equations for $x(t)$ and $y(t)$, while the mass profile can be obtained separately. Hence, we write the Hamiltonian as:
\begin{equation}
	\mathcal{H}(t)=\lambda_xF_x+\lambda_yF_y=\lambda_xF_x+\lambda_x\tan\big(\chi(t)\big)F_y=-\mathbf{c}_t,\quad \forall t\in[0,t_f].
\end{equation} 
\begin{assumption}\label{assumption3}
	$\|\mathbf W\|_2<v(t),\ \forall t\in[0,t_f]$.
\end{assumption}
\begin{theorem}\label{theo3}
	The minimizer $v^*(t)$ of the Hamiltonian for the minimum time problem is $v^*(t)=v_{max},\ \forall t\in[0,t_f]$.
\end{theorem}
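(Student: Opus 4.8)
The plan is to exploit the fact that, for the minimum-time problem, the Hamiltonian is \emph{affine} in the airspeed $v$, so that its minimizer over $[v_{\min},v_{\max}]$ is dictated entirely by the sign of $\partial\mathcal H/\partial v$, a quantity that does not depend on $v$. First I would write the minimum-time Hamiltonian as $\mathcal H = v\,(\lambda_x\cos\chi+\lambda_y\sin\chi) + (\lambda_x W_x+\lambda_y W_y)$ and observe that only the first group carries $v$; hence $\partial\mathcal H/\partial v = \lambda_x\cos\chi+\lambda_y\sin\chi$ is constant in $v$. Substituting the optimal-heading relation $\tan\chi=\lambda_y/\lambda_x$ from \eqref{eq411} then collapses this coefficient to $\lambda_x/\cos\chi$, i.e.\ exactly the expression already encountered in the minimum-fuel analysis \eqref{eq43d}. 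Thus the claim reduces to a sign determination: if $\partial\mathcal H/\partial v<0$ uniformly, the minimizer is the upper bound $v_{\max}$.

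The key step is to pin down that sign. Minimizing $\mathcal H$ over the heading for fixed $v>0$ selects the branch of $\tan\chi=\lambda_y/\lambda_x$ in which the airspeed vector $(\cos\chi,\sin\chi)$ points opposite to $(\lambda_x,\lambda_y)$, so that $\cos\chi=-\lambda_x/\sqrt{\lambda_x^2+\lambda_y^2}$ and $\lambda_x\cos\chi+\lambda_y\sin\chi=-\sqrt{\lambda_x^2+\lambda_y^2}\le 0$. To upgrade this to a \emph{strict} inequality I would rule out the degenerate costate: since the Hamiltonian is autonomous with $\mathcal H\equiv-\mathbf c_t$ and $\mathbf c_t>0$ for the pure minimum-time objective, $\lambda_x=\lambda_y=0$ would force $\mathcal H=0$, a contradiction; hence $\sqrt{\lambda_x^2+\lambda_y^2}>0$ for all $t$, and $\partial\mathcal H/\partial v=-\sqrt{\lambda_x^2+\lambda_y^2}<0$. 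Assumption \ref{assumption3} enters to guarantee well-posedness: evaluating $\mathcal H\equiv-\mathbf c_t$ at the optimal heading gives $v\sqrt{\lambda_x^2+\lambda_y^2}=\mathbf c_t+\lambda_xW_x+\lambda_yW_y$, and the condition $\|\mathbf W\|_2<v$ (via Cauchy--Schwarz) keeps the net groundspeed along the optimal direction strictly positive, consistent with the monotonicity assumption $\dot x,\dot y>0$, so that increasing airspeed genuinely reduces travel time rather than letting the wind dominate the kinematics.

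With $\partial\mathcal H/\partial v<0$ established, $\mathcal H$ is strictly decreasing in $v$ on $[v_{\min},v_{\max}]$, so the pointwise minimizer is the upper endpoint, yielding $v^*(t)=v_{\max}$ for all $t\in[0,t_f]$; the decoupled mass equation can then be integrated a posteriori. I expect the main obstacle to be this sign determination rather than the linear-structure observation: one must carefully justify the selection of the minimizing heading branch (so that $\lambda_x/\cos\chi=-\sqrt{\lambda_x^2+\lambda_y^2}$ and not $+\sqrt{\lambda_x^2+\lambda_y^2}$), exclude the abnormal case $\sqrt{\lambda_x^2+\lambda_y^2}=0$, and invoke Assumption \ref{assumption3} to ensure strictly positive forward progress. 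A minor subtlety, worth a one-line supplementary check, is the possibility that the heading bound is active: there the interior relation $\tan\chi=\lambda_y/\lambda_x$ fails, and one must verify directly that $\partial\mathcal H/\partial v=\lambda_x\cos\chi+\lambda_y\sin\chi\le 0$ still holds on any heading-boundary arc so that the conclusion $v^*=v_{\max}$ persists.
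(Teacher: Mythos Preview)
Your argument is correct and takes a genuinely different route from the paper. Both proofs exploit the affinity of $\mathcal H$ in $v$ and reduce the question to showing $S(t)=\lambda_x\cos\chi+\lambda_y\sin\chi<0$, but they establish this sign by different means. The paper factors the autonomous Hamiltonian as $\mathcal H=S(t)\,Q(t)=-\mathbf c_t<0$ with $Q(t)=v+W_x\cos\chi+W_y\sin\chi$, and then invokes Assumption~\ref{assumption3} (via Cauchy--Schwarz) to force $Q(t)>0$, whence $S(t)<0$. You instead appeal to the \emph{branch selection} in the heading optimality: minimizing $\mathcal H$ over $\chi$ picks $(\cos\chi,\sin\chi)$ anti-parallel to $(\lambda_x,\lambda_y)$, giving $S(t)=-\sqrt{\lambda_x^2+\lambda_y^2}$ directly, and then use $\mathcal H\equiv-\mathbf c_t\neq 0$ only to rule out the degenerate costate. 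Your route is slightly more economical in that Assumption~\ref{assumption3} is not needed for the sign determination itself (you relegate it to a well-posedness remark about positive groundspeed), whereas in the paper it is load-bearing; on the other hand, the paper's factorization is purely algebraic and sidesteps any discussion of which root of $\tan\chi=\lambda_y/\lambda_x$ is selected. Your closing caveat about heading-boundary arcs is a nice bit of extra care that the paper does not address.
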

\begin{proof}
	The Hamiltonian can be expanded as:
	\begin{equation}\label{eq60}
		\mathcal{H}(t)=\lambda_x\big(v\cos\big(\chi(t)\big)+W_x\big)+\lambda_x\tan\big(\chi(t)\big)+\lambda_x\tan\big(\chi(t)\big)\big(v\sin\big(\chi(t)\big)+W_y\big)=-\mathbf{c}_t.
	\end{equation} 
Since $v^*(t)$ is an affine control in this formulation, we write:
\begin{equation}\label{eq61}
	v^*(t) := \left\{
	\begin{aligned}
		&v_{\max} && S(t) < 0,\\
		&v_{\min} && S(t) > 0,\\
		&\textbf{singular} && S(t) = 0.
	\end{aligned}
	\right.
\end{equation}
 where the switching function $S(t)$ is defined as:
 \begin{equation}
 	S(t):=\frac{\partial\mathcal{H}}{\partial v}=\lambda_x\cos\big(\chi(t)\big)+\lambda_{x}\tan\big(\chi(t)\big)\sin\big(\chi(t)\big)=\frac{\lambda_{x}}{\cos\big(\chi(t)\big)}.
 \end{equation}
On the other hand, we can rearrange the Hamiltonian, Eq. (\ref{eq60}), as:
\begin{equation}\label{eq63}
	\mathcal{H}(t)=\frac{\lambda_x}{\cos\big(\chi(t)\big)}Q(t)=-\mathbf{c}_t<0.
\end{equation}
where $Q(t):=v+W_x\cos\big(\chi(t)\big)+W_y\sin\big(\chi(t)\big)$. 
Now, under Assumption~\ref{assumption3}, we have:
\begin{equation}
	\bigl|W_x\cos\chi(t) + W_y\sin\chi(t)\bigr|
	\;\le\;
	\sqrt{W_x^2 + W_y^2}
	\;=\;\|\mathbf W\|_2
	\;<\;v(t).
\end{equation}
Hence:
\begin{equation}
	Q(t)
	= v(t) + W_x\cos\chi(t) + W_y\sin\chi(t)
	\;\ge\;
	v(t) - \bigl|W_x\cos\chi(t) + W_y\sin\chi(t)\bigr|
	\;>\;0.
\end{equation}
Therefore, from Eq. (\ref{eq63}):
\begin{equation}
	\frac{\lambda_{x}}{\cos\big(\chi(t)\big)}<0 \Rightarrow S(t)<0.
\end{equation}
and from Eq. (\ref{eq61}), $v^*(t)=v_{max},\ \forall t\in[0,t_f]$.
\end{proof}
\begin{theorem}
	Suppose the wind field is constant, $	W_x(x, y) \equiv \bar{W}_x \quad \text{and} \quad W_y(x, y) \equiv \bar{W}_y$
	for some constants \( \bar{W}_x, \bar{W}_y \in \mathbb{R} \). Then the optimal time \( t_f^* \), is:
	\begin{equation}
		t_f^* =\frac{x_f}{v_{max}\cos \chi_0+\bar{W}_x}.
	\end{equation}
where $	\chi_0 = -\arctan\ \frac{x_f}{y_f} + \arccos\ \Bigl(\frac{ x_f\bar W_y - y_f\bar W_x}{v_{\max}\sqrt{x_f^2 + y_f^2}}\Bigr)$.
\end{theorem}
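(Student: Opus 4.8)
The plan is to reduce the problem to a pair of algebraic equations in the two unknowns $\chi_0$ and $t_f^*$ by first fixing the speed and heading profiles, then integrating the kinematics. First I would invoke Theorem \ref{theo3} to set $v^*(t)\equiv v_{\max}$. Next I would argue that the optimal heading is constant: in the minimum-time problem $F_z=0$, and for a spatially constant wind all of $\partial W_x/\partial x$, $\partial W_x/\partial y$, $\partial W_y/\partial x$, $\partial W_y/\partial y$ vanish, so the co-state equations (\ref{eq42}) collapse to $\tfrac{d\lambda_x}{dt}=\tfrac{d\lambda_y}{dt}=0$. Hence $\lambda_x,\lambda_y$ are constant, and since $\tan\chi=\lambda_y/\lambda_x$ the optimal heading equals a constant $\chi_0$; equivalently, the heading ODE (\ref{eq43}) reduces to $dq/dt=0$.

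With both $v$ and $\chi$ constant, the state equations integrate immediately to straight-line ground tracks. Taking $x(0)=y(0)=0$, I would write
\begin{equation}
	x(t)=\bigl(v_{\max}\cos\chi_0+\bar W_x\bigr)\,t,\qquad
	y(t)=\bigl(v_{\max}\sin\chi_0+\bar W_y\bigr)\,t.
\end{equation}
Imposing the terminal conditions $x(t_f^*)=x_f$ and $y(t_f^*)=y_f$ then gives the two scalar equations $x_f=(v_{\max}\cos\chi_0+\bar W_x)\,t_f^*$ and $y_f=(v_{\max}\sin\chi_0+\bar W_y)\,t_f^*$. The first already yields the claimed expression for $t_f^*$, so the only remaining task is to determine $\chi_0$.

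To extract $\chi_0$ I would eliminate $t_f^*$ between the two terminal equations by cross-multiplication, obtaining $v_{\max}\bigl(x_f\sin\chi_0-y_f\cos\chi_0\bigr)=y_f\bar W_x-x_f\bar W_y$. Setting $x_f=r\sin\theta_0$, $y_f=r\cos\theta_0$ with $r=\sqrt{x_f^2+y_f^2}$ and $\theta_0=\arctan(x_f/y_f)$, the bracket collapses by the angle-addition identity to $-r\cos(\chi_0+\theta_0)$, giving
\begin{equation}
	\cos\bigl(\chi_0+\theta_0\bigr)=\frac{x_f\bar W_y-y_f\bar W_x}{v_{\max}\sqrt{x_f^2+y_f^2}},
\end{equation}
and inverting the cosine produces the stated $\chi_0=-\arctan(x_f/y_f)+\arccos(\cdots)$.

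The main obstacle I anticipate is the branch selection. The cosine inversion admits the two candidates $\chi_0+\theta_0=\pm\arccos(\cdots)$, and only one renders $t_f^*>0$ and minimal. I would resolve this by appealing to the sign information already available from the minimum-time Hamiltonian: Eq. (\ref{eq63}) forces $Q(t)=v_{\max}+\bar W_x\cos\chi_0+\bar W_y\sin\chi_0>0$, while the monotonicity Assumption (the running state satisfies $\dot x>0$, $\dot y>0$) requires the ground-speed components $v_{\max}\cos\chi_0+\bar W_x$ and $v_{\max}\sin\chi_0+\bar W_y$ to be positive. These constraints single out the principal ($+$) branch of the $\arccos$ as the admissible minimizer; I would verify that this choice indeed returns a positive $t_f^*$ consistent with the terminal displacement, completing the argument.
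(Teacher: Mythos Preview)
Your proposal is correct and follows essentially the same route as the paper: invoke Theorem~\ref{theo3} for $v^*\equiv v_{\max}$, use the vanishing wind gradients in Eq.~(\ref{eq43}) (equivalently the constancy of $\lambda_x,\lambda_y$) to get a constant heading $\chi_0$, integrate the linear kinematics, eliminate $t_f^*$ between the two terminal conditions, and reduce to the same identity $\cos(\chi_0+\arctan(x_f/y_f))=\tfrac{x_f\bar W_y-y_f\bar W_x}{v_{\max}\sqrt{x_f^2+y_f^2}}$. Your branch-selection discussion via the sign of $Q(t)$ and the monotonicity assumption is in fact more explicit than the paper's, which simply asserts $\chi_0,\varphi\in[-\tfrac{\pi}{2},\tfrac{\pi}{2}]$.
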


\begin{proof}
	For a constant wind field, from Eq.~(\ref{eq43}) we have \(\chi^*(t)=\chi_0\) constant, and by Theorem~\ref{theo3}, \(v^*(t)=v_{\max}\) for all \(t\in[0,t_f]\).  Hence the state equations reduce to:
	\begin{equation}\label{eq69}
		\begin{split}
		&\frac{dx}{dt} = v_{\max}\cos\chi_0 + \bar W_x,\quad \frac{dy}{dt} = v_{\max}\sin\chi_0 + \bar W_y,\\
		&x(0)=y(0)=0,\quad x(t_f)=x_f,\;y(t_f)=y_f.
		\end{split}
	\end{equation}
	Dividing the two ODEs gives:
	\begin{equation}
		\frac{dy}{dx}
		= \frac{v_{\max}\sin\chi_0 + \bar W_y}{v_{\max}\cos\chi_0 + \bar W_x}
		\;\Longrightarrow\;
		v_{\max}\bigl(y_f\cos\chi_0 - x_f\sin\chi_0\bigr)
		= x_f\bar W_y - y_f\bar W_x.
	\end{equation}
	Let $\mathcal{A} := x_f\bar W_y - y_f\bar W_x$, and $\mathcal{B} := \sqrt{x_f^2 + y_f^2}$. Then:
	\begin{equation}
		y_f\cos\chi_0 - x_f\sin\chi_0
		= \frac{\mathcal{A}}{v_{\max}}
		= \mathcal{B} \,\cos\bigl(\chi_0 + \varphi\bigr),
		\quad
		\varphi := \arctan\!\frac{x_f}{y_f}.
	\end{equation}
	Thus $\cos\bigl(\chi_0 + \varphi\bigr)
	= \frac{\mathcal{A}}{v_{\max}\,\mathcal{B}}$, and since \(\chi_0,\varphi\in[-\tfrac\pi2,\tfrac\pi2]\), we get:
	\begin{equation}
		\chi_0 = -\varphi + \arccos\!\Bigl(\frac{\mathcal{A}}{v_{\max}\mathcal{B}}\Bigr).
	\end{equation}
	which upon substitution yields the stated formula for \(\chi_0\).  Finally, integrating \(\frac{dx}{dt}\) gives $t^*_f$.
\end{proof}

\subsection{The Optimal Boundary Arcs}\label{sec32}
\textbf{a) Control Bounds:} Under regularity assumption for the controls $v(t)$, and $\chi(t)$, any switch to control bounds satisfies $\mathcal{C}_a\big(U(t)\big)=0$, where $\mathcal{C}_a$ is the active entry of $\mathcal{C}(U(t))$ defined as:
\begin{equation}
	\mathcal{C}\bigl(U(t)\bigr):=\bigg(v(t)-v_{\max},\  v_{\min}-v(t),\ \chi(t)-\chi_{\max},\ \chi_{\min}-\chi(t)\bigg)^\top.
\end{equation}
Suppose that $\mathcal{C}_a\big(U(t)\big)=0$ on $t\in [t_1,t_2],\ 0<t_1<t_2<t_f$, then $\frac{\partial \mathcal{H}}{\partial U_a}|_{t\in[t_1,t_2)}\neq 0$ where $U_a\in \{v,\chi\}$. Therefore, under the continuity of the switching function, we have $\frac{\partial \mathcal{H}}{\partial U_a}(t_2)= 0$. 

\textbf{Mixed-Boundary Arc:} The optimal mixed-boundary arc is obtained as the solution to $\mathcal{S}_a\bigl(X,U(t)\bigr)=0$, where $\mathcal{S}_a$ is the active entry of $\mathcal{S}$ defined as:
\begin{equation}
	\mathcal{S}\bigl(X,U(t)\bigr)=\bigg(\Pi(t)-\Pi_{\max},\ \Pi_{\min}-\Pi(t)\bigg)^\top.
\end{equation}
Similarly, suppose that $\mathcal{S}_a\big(X,U(t)\big)=0$ on $t\in [t_1,t_2],\ 0<t_1<t_2<t_f$, then $\frac{\partial \mathcal{H}}{\partial U_a}|_{t\in[t_1,t_2)}\neq 0$ where $U_a\in \{v,\chi\}$. In this case, the optimal mixed-boundary arc $v^*(t)$ is obtained as:
\begin{equation}
	\frac{D(m,v^*)}{T_{\max}(v^*)}-\Pi_b=0,\quad \Pi_b:= \Pi_{\min}\ \lor\ \Pi_{\max}.
\end{equation}
and, under the continuity of the switching function, we have $\frac{\partial \mathcal{H}}{\partial U_a}(t_2)= 0$.  
\subsection{Numerical Solution for the BVP}
The proposed solution methodology is formulated as a boundary value problem, with the shooting parameters defined as $\lambda_{x}(0)$, $\chi(0)$, and the final time $t_f$. The terminal conditions are imposed via the constraints $x(t_f) = x_f$, $y(t_f) = y_f$, and $\lambda_{m}(t_f) = \mathbf{c}_m$. The system dynamics are integrated using a third-order Runge–Kutta method. Initially, the control inputs are assumed to satisfy the first-order optimality condition $\partial \mathcal{H} / \partial U = 0$, and the integration proceeds accordingly until an inequality constraint becomes active. At that point, the control strategy transitions to the appropriate boundary arc, following the analytical framework established in the preceding section. The resulting shooting problem may be addressed using standard gradient-based optimization techniques. In this work, we employed Sequential Quadratic Programming (SQP), implemented via MATLAB's \texttt{fmincon} function, with a trivial objective function, as the primary objective is to enforce satisfaction of the terminal constraints rather than to minimize a cost functional (see Algorithm \ref{alg1}).    
\begin{algorithm}[H]\label{alg1}
	\small
	\caption{Numerical Algorithm of the Surrogate Optimization Framework}\label{alg1}
	\begin{algorithmic}[1]
		\State \textbf{Input:} initial guess $\mathbf{p}^{(0)}=(\lambda_x(0),\chi(0),t_f)$, tolerance $\varepsilon$
		\State $k \gets 0$
		\Repeat
		\State Integrate $\dot{X}=F(X,U)$, where $U$ satisfies $\partial_U \mathcal{H}=0$
		\While{$t<t_f$}
		\If{$\exists\ a$ s.t. $\mathcal{C}_a(U)=0$ or $\mathcal{S}_a(X,U)=0$}
		\State Switch to boundary arc: $ U_a = \begin{cases}
			U_{a,\min} \text{ or } U_{a,\max}, & \text{if } \mathcal{C}_a(U)=0\\
			v^*:\frac{D(m,v^*)}{T_{\max}(v^*)}=\Pi_b, & \text{if } \mathcal{S}_a(X,U)=0
		\end{cases}$
		\While{$|\partial_{U_a} \mathcal{H}|>\delta$}
		\State Continue integration on boundary arc
		\EndWhile
		\State Switch back to $\partial_U \mathcal{H}=0$
		\Else
		\State Continue integration on interior arc
		\EndIf
		\EndWhile
		\State Residual: $\mathbf{r}^{(k)} := \bigl[x(t_f)-x_f,\, y(t_f)-y_f,\, \lambda_m(t_f)-\mathbf{c}_m\bigr]^\top$
		\State Update $\mathbf{p}^{(k+1)}$ via SQP: $\min \frac{1}{2}\|\mathbf{p}-\mathbf{p}^{(k)}\|^2$ s.t. $\mathbf{r}^{(k)}=0$
		\State $k \gets k+1$
		\Until{$\|\mathbf{r}^{(k)}\|<\varepsilon$}
		\State \textbf{Output:} $\mathbf{p}^*, X^*(t), U^*(t)$
	\end{algorithmic}
\end{algorithm}

\section{Simulation Results}\label{sec5}
In this section, we first compare the optimal solutions obtained via the surrogate optimization framework with those generated by a direct transcription method. Subsequently, we present representative optimal trajectories produced by the surrogate framework for several case studies. Notably, across all simulations, boundary arcs, typically characterized by large values of $\mathbf{c}_t$, were observed to be minimal, generally persisting for only a few seconds. Given their negligible duration relative to standard cruise segments, these intervals have been excluded from the comparative illustrations to enhance clarity and focus on the dominant trajectory features.
\subsection{Comparison}
Table \ref{tab:comp} reports a comprehensive set of comparison results between the proposed PMP-based surrogate framework and a high-resolution direct shooting method, in which the system dynamics are discretized using a third-order Runge--Kutta scheme with 300 nodes. The comparisons are carried out across two flight-sensitive regions by varying their associated parameters ($c_{s1}$ and $c_{s2}$), while keeping the remaining parameters fixed. In addition, further experiments were performed by varying the objective coefficients $\mathbf{c}_t$ and $\mathbf{c}_m$. For these latter experiments, the initial mass was fixed at $140$~tons and the cruise altitude at $10$~km. The table also reports cases where the initial mass was varied (150 and 160~tons) and the cruise altitude adjusted (9 and 11~km). 

In all cases, the surrogate framework achieved a substantial computational speedup, averaging approximately $25\times$ faster than the direct shooting method. At the same time, the relative deviation in the optimal objective was found to be negligible, with an average deviation below $0.0004$ (i.e., $0.04\%$). In addition, we compare the constraint violations obtained from the SQP solver in both approaches. Recall that the surrogate framework is formulated as a multi-point boundary value problem; hence, the SQP is applied with only a dummy (trivial) objective function. The comparison is performed for a nominal case with an initial mass of 140~tons, cruise altitude of 10~km, and parameter values $\mathbf{c}_m=-1$, $\mathbf{c}_t=1$, and $c_{s1}=c_{s2}=1$ (see Fig.~\ref{fig:cop1}). As illustrated in the figure, the proposed PMP-based surrogate framework not only converges within a few iterations but also enforces the constraints to several orders of magnitude greater accuracy than the direct shooting method.

\subsection{Case Study: Surrogate Optimization in a Synthetic Wind-Free Scenario}
In this section, we present optimal trajectories and costate evolutions obtained using the surrogate optimization framework for a synthetic scenario with no ambient wind and a circular flight-sensitive region placed along the chord connecting the origin $(0,0)$ to the terminal point $(x_f, y_f = x_f)$. 

To assess the system's response to varying obstacle locations, five test cases are considered, where the center of the circular sensitive region is systematically shifted along the chord from $(0.3\sqrt{2}\,x_f,\,0.3\sqrt{2}\,x_f)$ to $(0.7\sqrt{2}\,x_f,\,0.7\sqrt{2}\,x_f)$ in increments of $0.1\sqrt{2}\,x_f$. Each case is solved as a minimum-fuel optimal control problem, employing terminal cost weights $\mathbf{c}_t = 0$ and $\mathbf{c}_m = -1$, with an initial mass of $m_0 = 150$\,tons, cruising altitude $\bar{h} = 10$\,km, and a terminal target of $(x_f, y_f) = (1000, 1000)$\,km. For each scenario, the obstacle-avoidance weight $\mathbf{c}_s$ is tuned to ensure complete avoidance of the sensitive region.

Figure~\ref{fig:combined} depicts the evolution of state variables, control inputs, and costate dynamics across all five cases. The results reveal a near-symmetric structure in both the state and costate trajectories as the obstacle is displaced along the chord. Of particular interest is the marginal variation observed in the mass-associated costate $\lambda_m(t)$ across all scenarios, suggesting that the mass‐sensitive component of the optimal policy remains largely unaffected by moderate changes in the obstacle's position. In addition, the mass dynamics $m(t)$ remained almost unaffected across all configurations.

{\renewcommand{\arrayrulewidth}{1.3pt} 
	
	\begin{table}[htbp]
		\centering
		\resizebox{\textwidth}{!}{%
			\begin{tabular}{|c|c|p{8cm}|c|c|}
				\hline
				\rowcolor{gray!2} 
				\textbf{Parameter} & \textbf{Value} & \textbf{Fixed Parameters} & \textbf{Relative Error (\%)} & \textbf{CT Ratio} \\
				\hline
				\rowcolor{blue!2} 
				$\mathbf{c}_t$ & $0.1$ & $\mathbf{c}_m=p_m,\; cs_1=p_{s1},\; cs_2=p_{s2},\; m_0=p_0,\; h=p_h$ & $0.052$ & $25.2$ \\
				\hline
				\rowcolor{blue!2} 
				$\mathbf{c}_t$ & $0.2$ & $\mathbf{c}_m=p_m,\; cs_1=p_{s1},\; cs_2=p_{s2},\; m_0=p_0,\; h=p_h$ & $0.041$ & $26.3$ \\
				\hline
				\rowcolor{red!2} 
				$\mathbf{c}_m$ & $-1$ & $\mathbf{c}_t=p_t,\; cs_1=p_{s1},\; cs_2=p_{s2},\; m_0=p_0,\; h=p_h$ & $0.054$ & $24.7$ \\
				\hline
				\rowcolor{red!2} 
				$\mathbf{c}_m$ & $-0.5$ & $\mathbf{c}_t=p_t,\; cs_1=p_{s1},\; cs_2=p_{s2},\; m_0=p_0,\; h=p_h$ & $0.038$ & $25.1$ \\
				\hline
				\rowcolor{green!2} 
				$cs_1$ & $0.5$ & $\mathbf{c}_t=p_t,\; \mathbf{c}_m=p_m,\; cs_2=p_{s2},\; m_0=p_0,\; h=p_h$ & $0.043$ & $27.3$ \\
				\hline
				\rowcolor{green!2} 
				$cs_1$ & $1.5$ & $\mathbf{c}_t=p_t,\; \mathbf{c}_m=p_m,\; cs_2=p_{s2},\; m_0=p_0,\; h=p_h$ & $0.046$ & $27.2$ \\
				\hline
				\rowcolor{orange!2} 
				$cs_2$ & $1$ & $\mathbf{c}_t=p_t,\; \mathbf{c}_m=p_m,\; cs_1=p_{s1},\; m_0=p_0,\; h=p_h$ & $0.039$ & $26.8$ \\
				\hline
				\rowcolor{orange!2} 
				$cs_2$ & $2$ & $\mathbf{c}_t=p_t,\; \mathbf{c}_m=p_m,\; cs_1=p_{s1},\; m_0=p_0,\; h=p_h$ & $0.033$ & $26.1$ \\
				\hline
				\rowcolor{purple!2} 
				$m_0$ & $150 [tons]$ & $\mathbf{c}_t=p_t,\; \mathbf{c}_m=p_m,\; cs_1=p_{s1},\; cs_2=p_{s2},\; h=p_h$ & $0.037$ & $23.4$ \\
				\hline
				\rowcolor{purple!2} 
				$m_0$ & $160 [tons]$ & $\mathbf{c}_t=p_t,\; \mathbf{c}_m=p_m,\; cs_1=p_{s1},\; cs_2=p_{s2},\; h=p_h$ & $0.049$ & $24.5$ \\
				\hline
				\rowcolor{yellow!2} 
				$\bar{h}$ & $9000 [m]$ & $\mathbf{c}_t=p_t,\; \mathbf{c}_m=p_m,\; cs_1=p_{s1},\; cs_2=p_{s2},\; m_0=p_0$ & $0.053$ & $23.9$ \\
				\hline
				\rowcolor{yellow!2} 
				$\bar{h}$ & $11000 [m]$ & $\mathbf{c}_t=p_t,\; \mathbf{c}_m=p_m,\; cs_1=p_{s1},\; cs_2=p_{s2},\; m_0=p_0$ & $0.033$ & $27.1$ \\
				\hline
			\end{tabular}%
		}
		\caption{Comparison of the surrogate model with high-resolution  numerical solution using direct optimization: $p_m=-1, p_{s1}=0.5, p_{s2}=1, p_0=140$ [tons], $p_h=10$ [km], $p_t=0.1$; \textbf{CT Ratio} stands for Computational Time Ratio}
		\label{tab:comp}
	\end{table}
	
}

\begin{figure}[htp]
	\centering
	\subfigure[]{
		\includegraphics[width=0.6\textwidth]{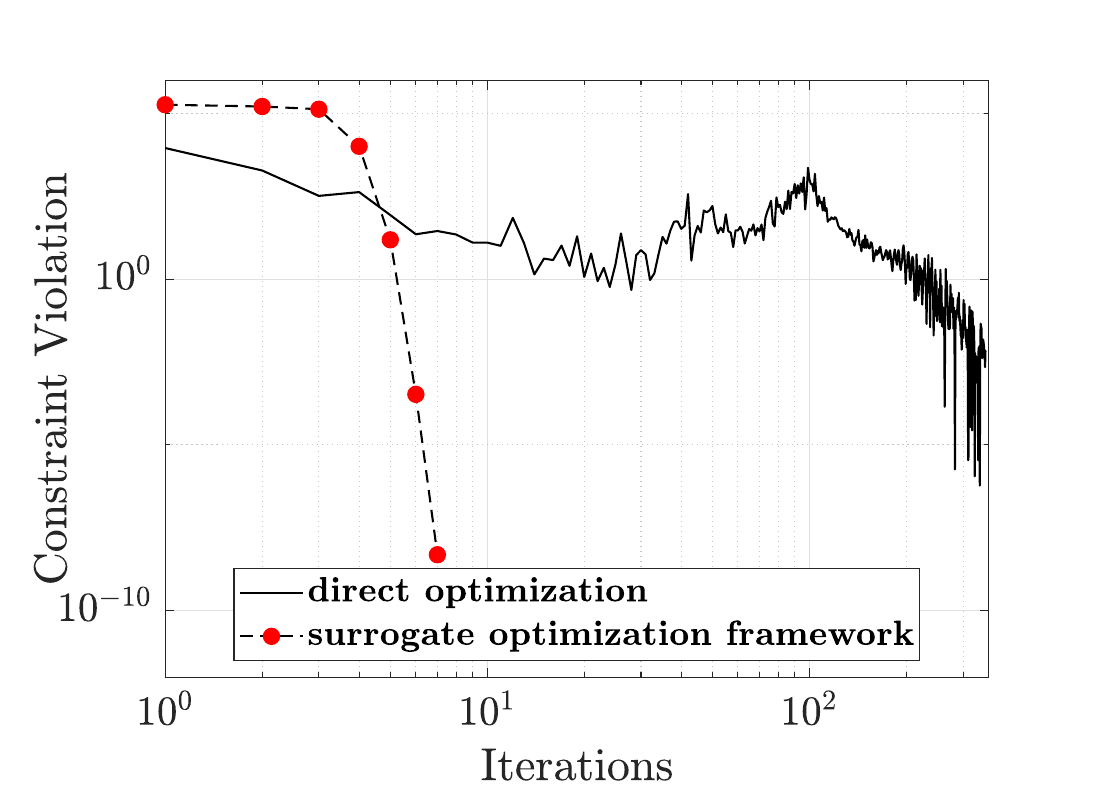}
		\label{fig:dplot}
	}
	\caption{constraint violation versus SQP iteration count: comparison of the \textbf{surrogate} and \textbf{direct} optimization methods}
	\label{fig:cop1}
\end{figure}

\begin{figure}[htp]
	\centering
	\subfigure[]{
		\includegraphics[width=0.48\textwidth]{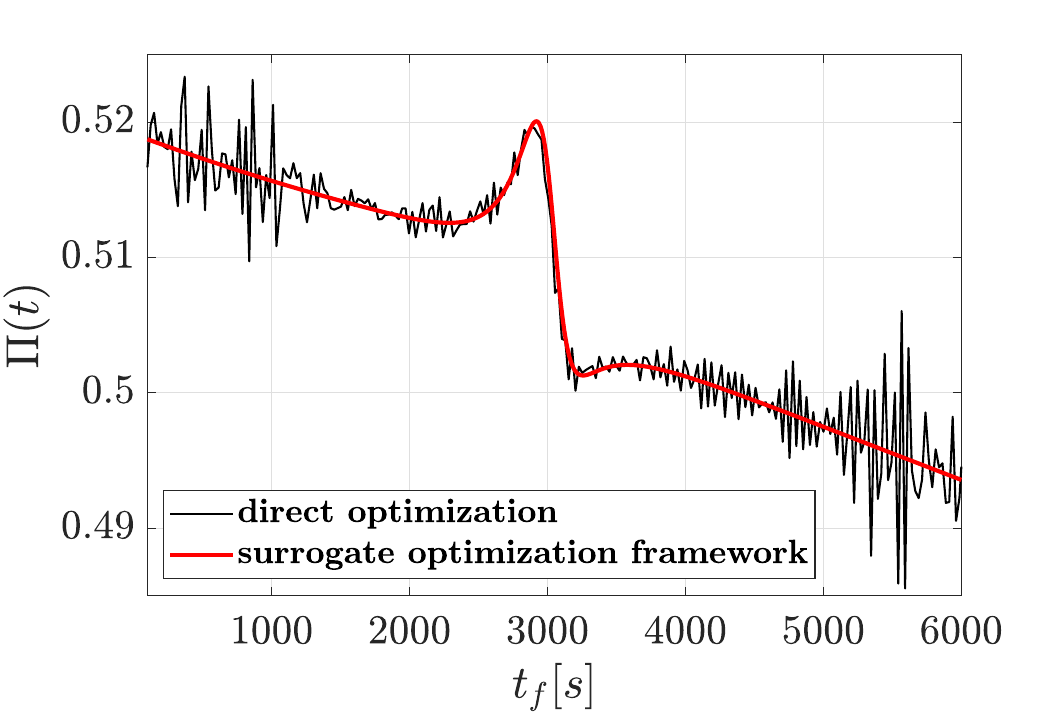}
		\label{fig:schematic}
	}
	\subfigure[]{
		\includegraphics[width=0.48\textwidth]{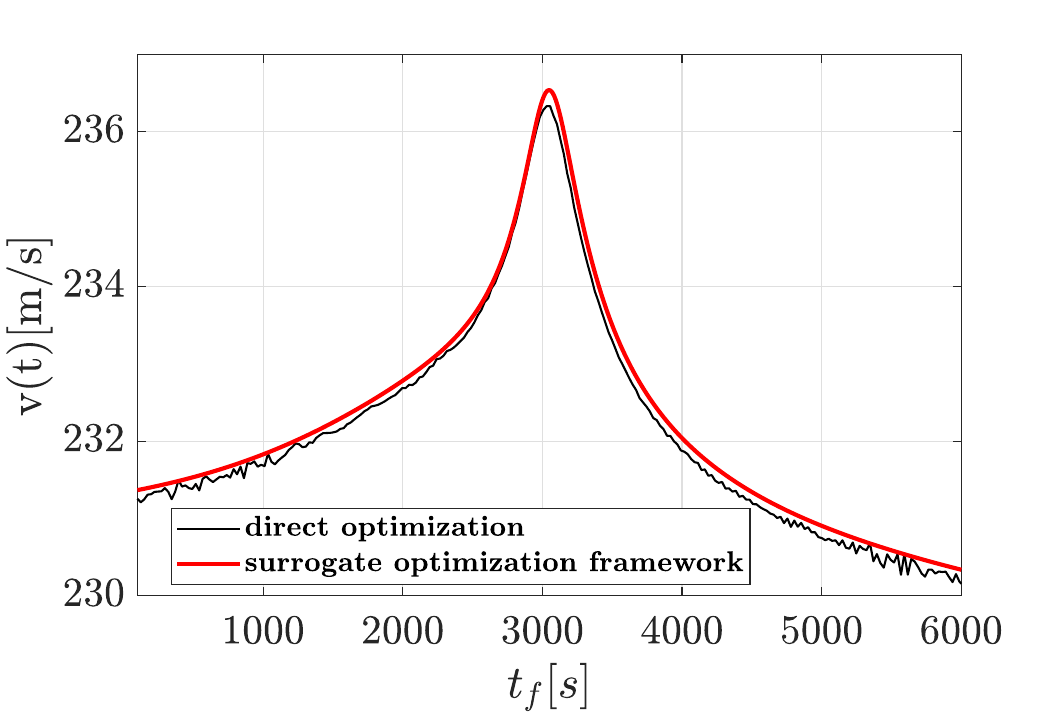}
		\label{fig:dplot}
	}
	\subfigure[]{
	\includegraphics[width=0.48\textwidth]{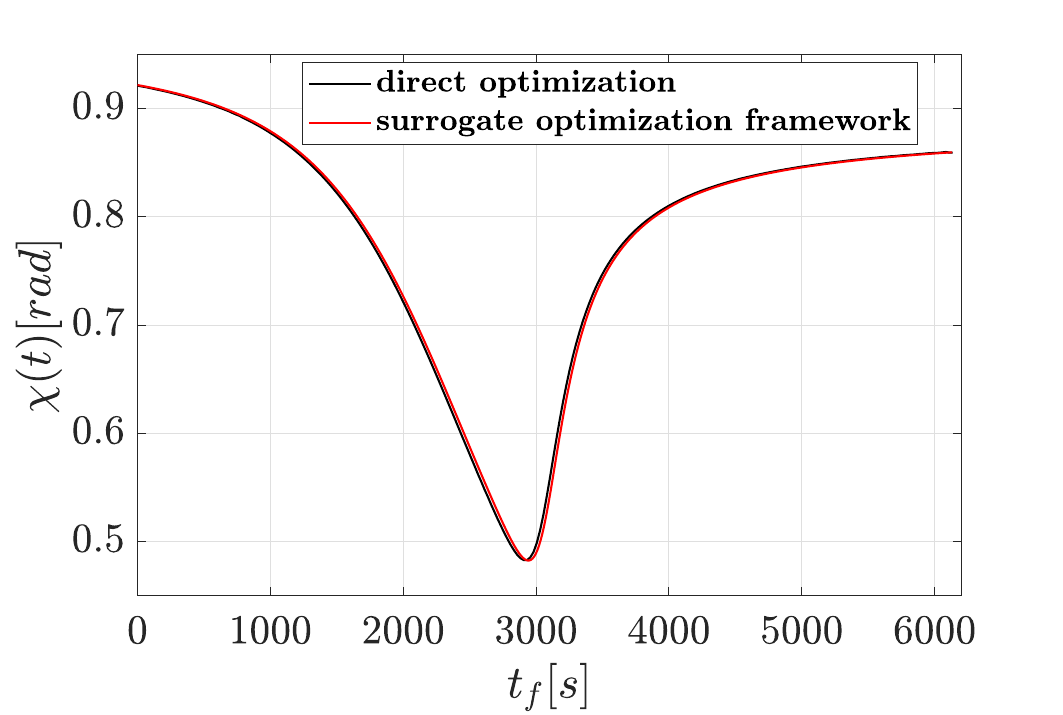}
	\label{fig:dplot}
}
	\subfigure[]{
	\includegraphics[width=0.48\textwidth]{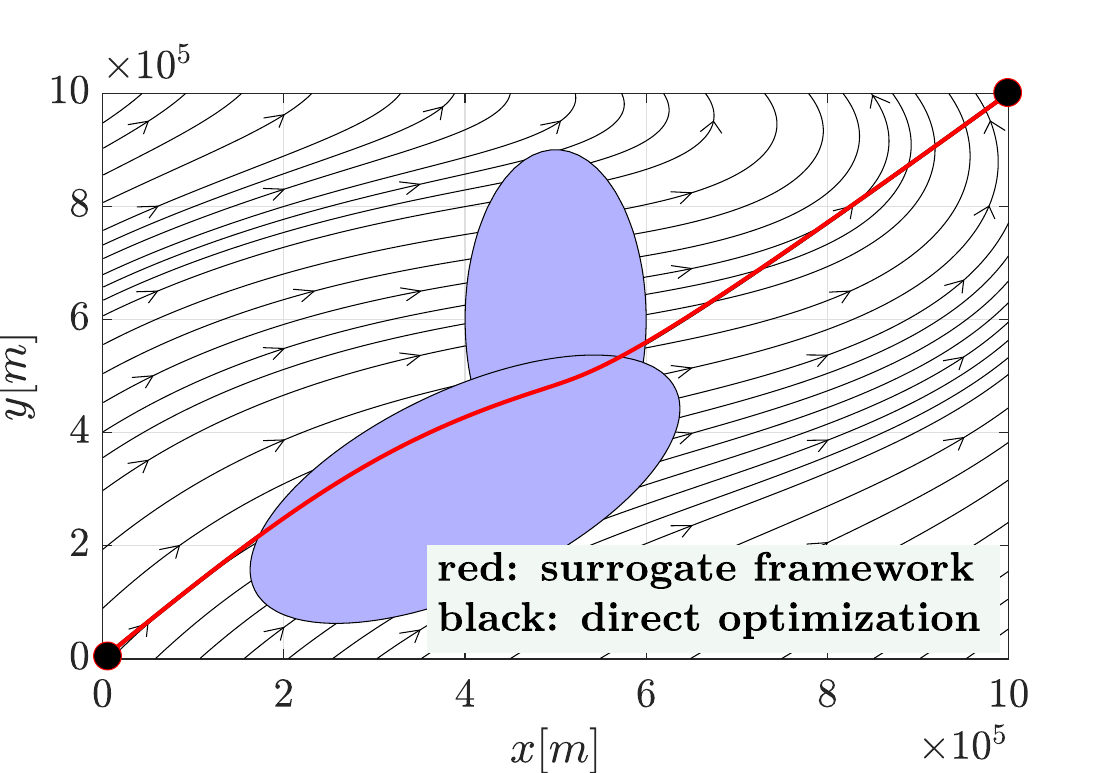}
	\label{fig:dplot}
}

	\caption{comparison between the \textbf{surrogate} optimization and \textbf{direct} method; ellipse$_1$: $a_1=0.1x_f; b_1=0.3x_f; y_{c1}=0.6x_f; x_{c1}=0.5x_f; \alpha_1=0$; ellipse$_2$: $a_2=0.3x_f; b_2=0.15x_f; y_{c2}=0.3x_f; x_{c2}=0.4x_f; \alpha_2=\pi/4$; $\bar{h}=10$ [m]; $m_0=140$ [tons]}
	\label{fig:cop}
\end{figure}
\clearpage
\newgeometry{top=0cm, bottom=0cm, left=3cm, right=3cm}
\begin{figure}[htp]
	\centering
	\subfigure[$\Pi(t)$ as a function of time $t$]{
		\includegraphics[width=0.48\textwidth]{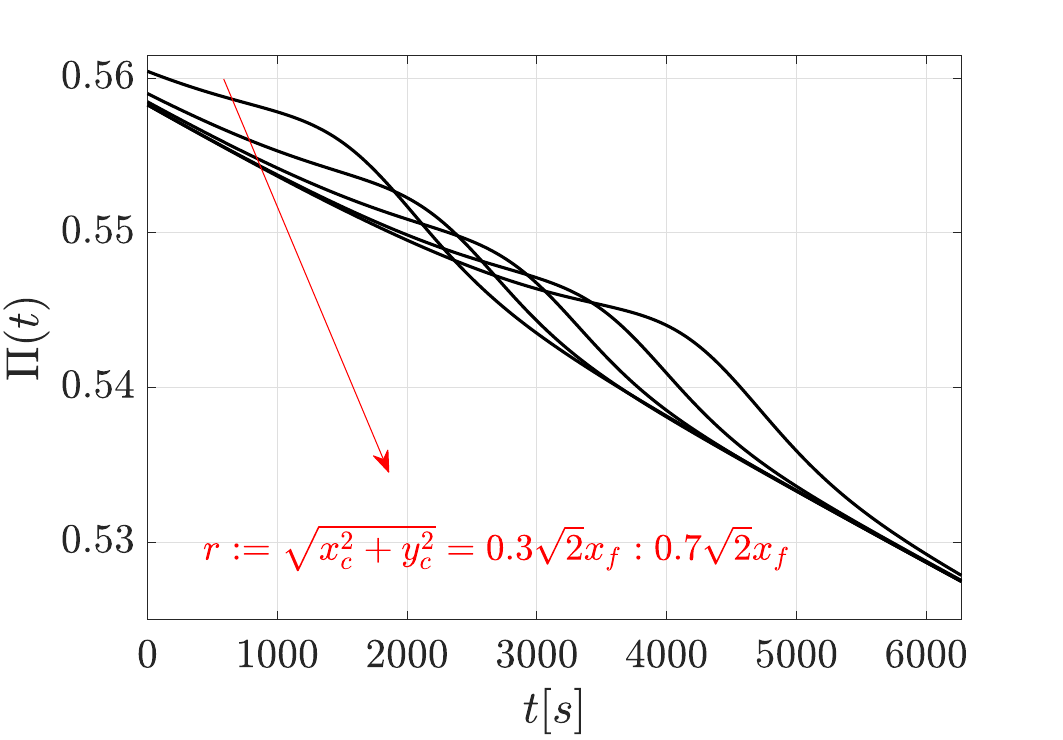}
		\label{fig:schematic}
	} 
	\subfigure[$v(t)$ as a function of time $t$]{
		\includegraphics[width=0.48\textwidth]{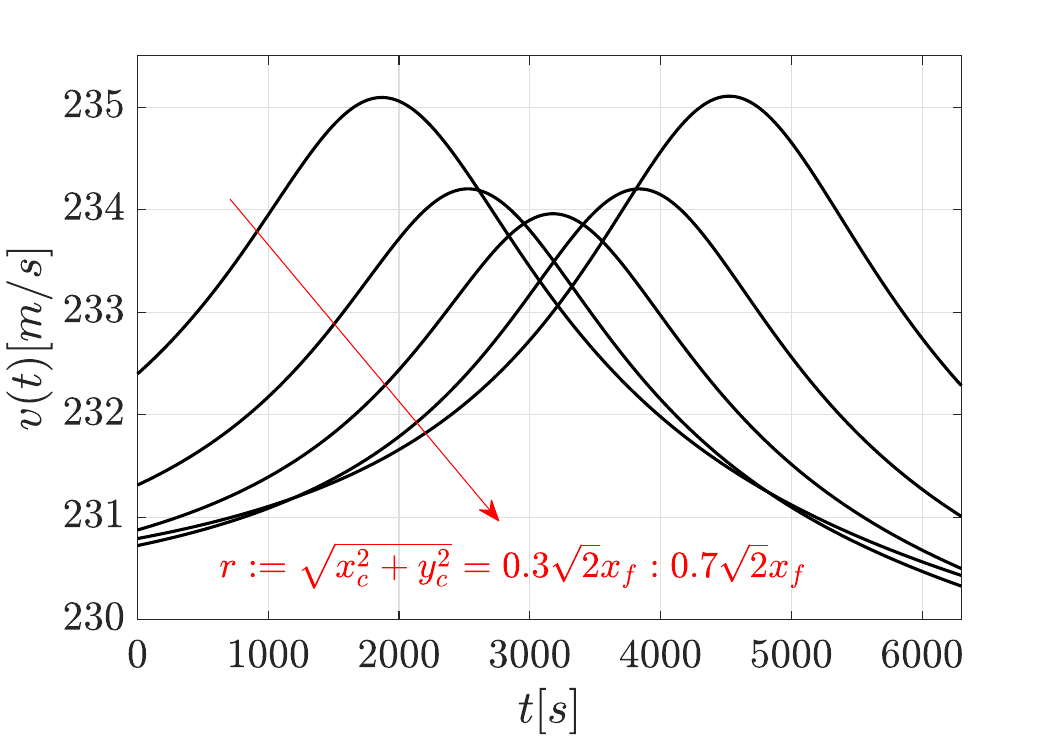}
		\label{fig:dplot}
	}
	\subfigure[$\chi(t)$ as a function of time $t$]{
		\includegraphics[width=0.48\textwidth]{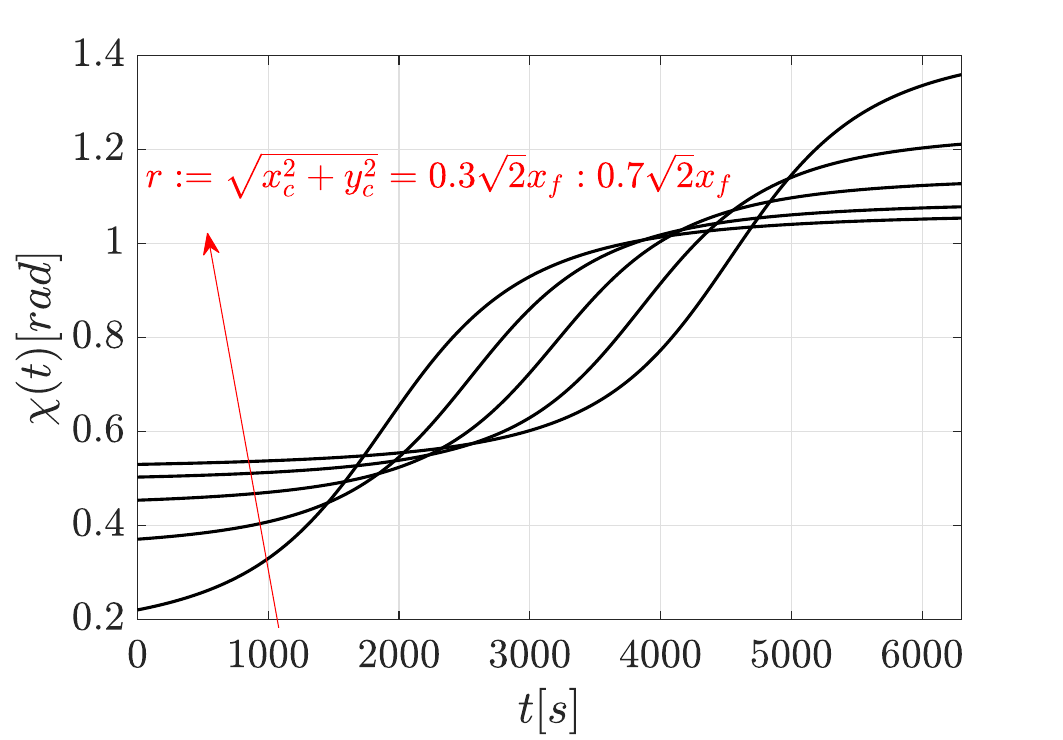}
		\label{fig:dplot}
	}
	\subfigure[lateral trajectories under varying obstacle configurations]{
		\includegraphics[width=0.48\textwidth]{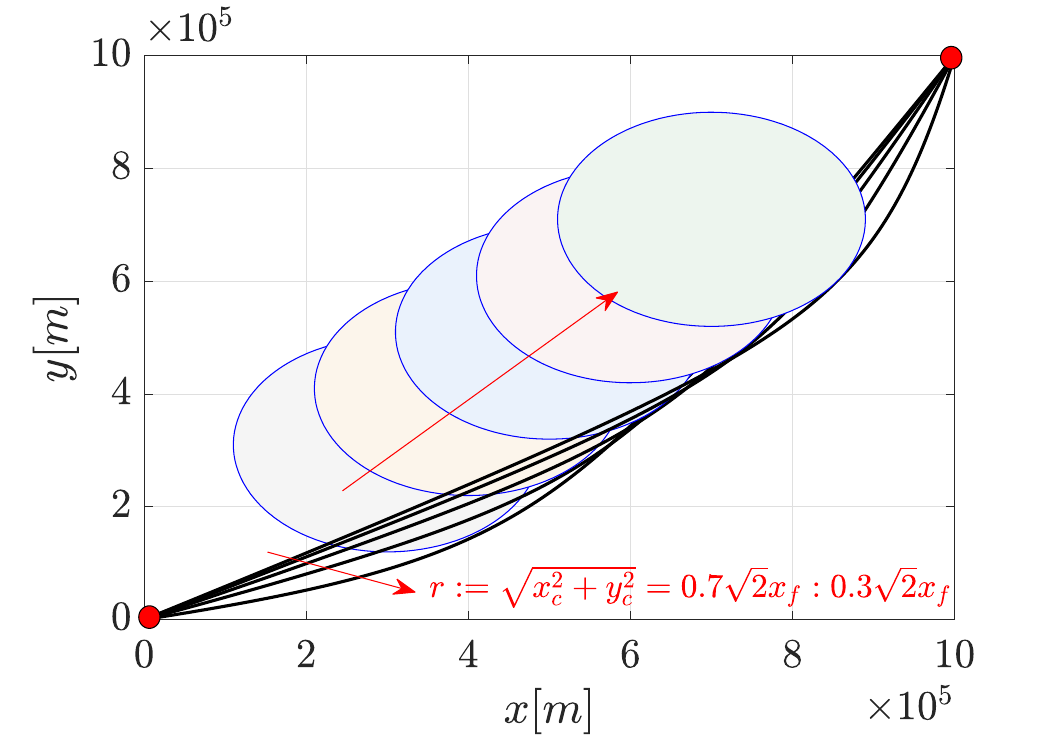}
		\label{fig:dplot}
	}
	\subfigure[costate variable $\lambda_x(t)$ as a function of time $t$]{
		\includegraphics[width=0.48\textwidth]{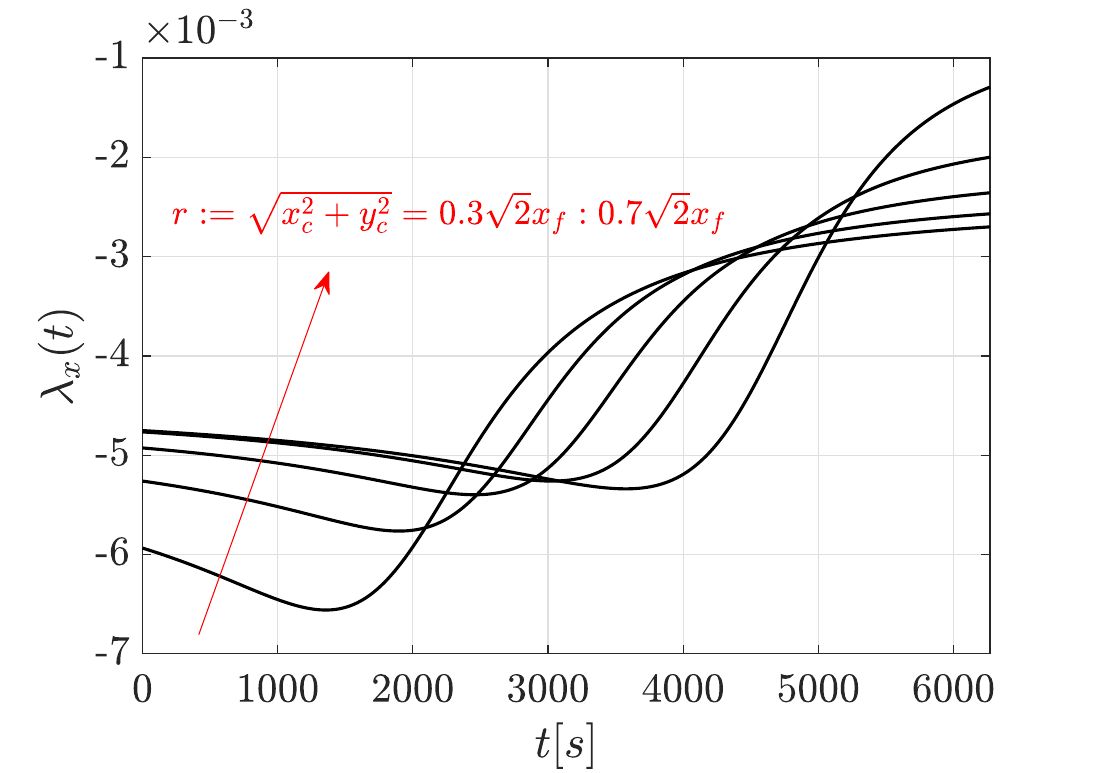}
		\label{fig:dplot}
	}
	\subfigure[costate variable $\lambda_{y}(t)$ as a function of time]{
	\includegraphics[width=0.48\textwidth]{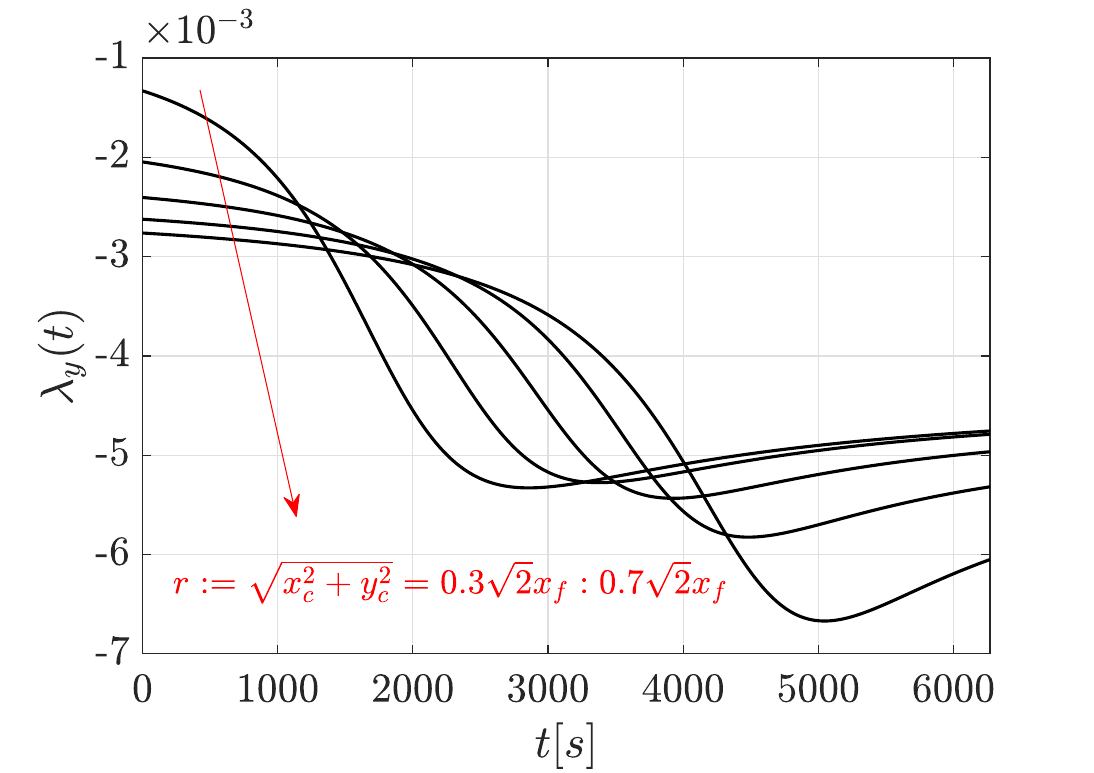}
	\label{fig:dplot}
    }
	\subfigure[costate variable $\lambda_{m}(t)$ as a function of time]{
	\includegraphics[width=0.48\textwidth]{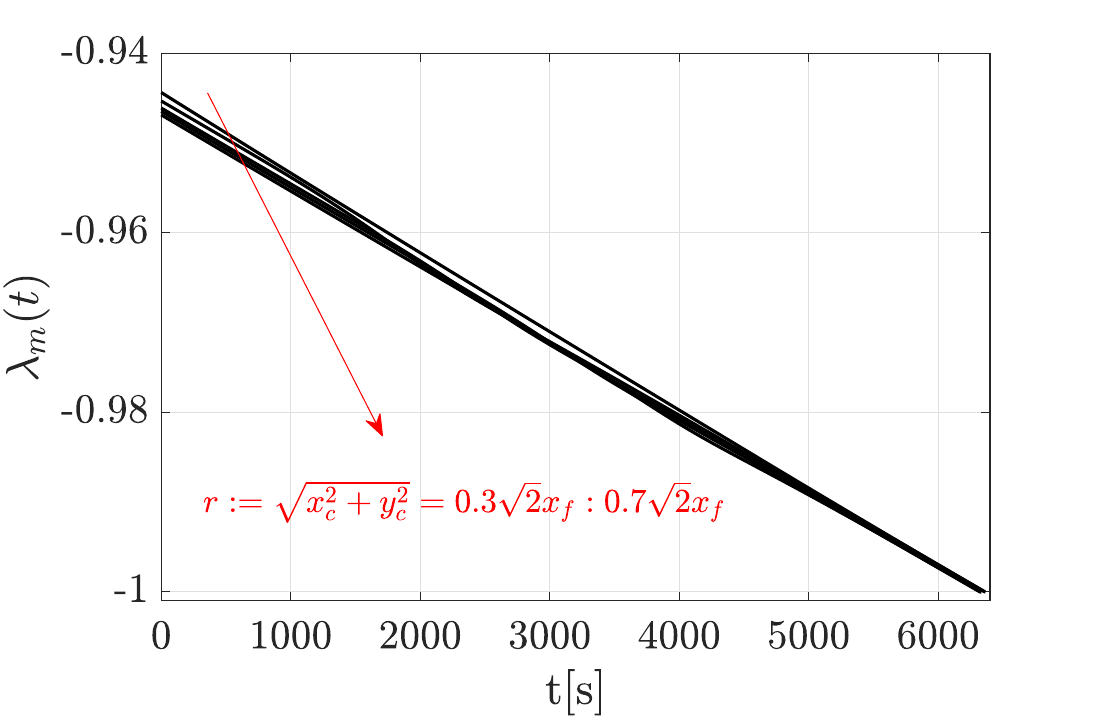}
	\label{fig:dplot}
    }
	\caption{optimal states, costates, and controls for a set of wind-free scenarios by the \textbf{surrogate} optimization framework: $\bar{h}=10 [km], m_0=150 [tons], \mathbf{c}_t=0, \mathbf{c}_m=-1$}
	\label{fig:combined}
\end{figure}
\restoregeometry
\clearpage

\section{Conclusion}\label{sec6}
This work presented a surrogate PMP-based framework for the optimal control of commercial aircraft cruise flights. The framework incorporates spatially variable wind fields and flight-sensitive areas within the optimization process. To model the wind field, we proposed an inviscid composite flow model constructed from reanalysis data. In particular, we argued that the inviscid composite flow formulation naturally satisfies the flow continuity equation while capturing the large-scale coherent structures of atmospheric winds, thereby making it well-suited for wind estimation in trajectory optimization. Flight-sensitive areas were represented by finite ellipses with adjustable parameters, including semi-major and semi-minor axes, orientation, and center position, enabling the flexible modeling of flight-restricted or high-cost regions.  

The proposed surrogate framework demonstrated substantial improvements in computational efficiency compared to high-resolution direct transcription methods. In particular, it achieved significant reductions in solution time while maintaining high accuracy in both states and controls, with virtually no deviation in the total objective value.  

Future work will focus on extending the surrogate formulation to incorporate time-space dependent representations of wind fields and flight-sensitive regions, as well as transitioning to a full three-dimensional aircraft model with all three flight controls. We also note that the costate associated with mass dynamics is not a sensitive variable in the context of commercial cruise optimization. Consequently, this costate variable may be approximated using a predefined linear function (or solved directly), a simplification that could prove valuable in the development of efficient solution methodologies for the full 3D model.


 	
\textbf{Data Availability Statement}\\
Given that the research is theoretical in nature, there is no
relevant data available for sharing.\\

\textbf{Conflict of Interest}\\
The authors declare no conflict of interest of any type within the present submission.

\bibliographystyle{unsrt}
\bibliography{Ref}

\section{Appendix}
\appendix
\section{Appendix}\label{app1}
We give the criteria for nullifying the heading angle dynamic, i.e., $\frac{d\chi}{dt}=\frac{g}{v}\tan\big(\mu(t)\big)$, with nearly zero impact on the optimality. This leads to a simplified, and more consistent cruise-flight dynamics for commercial aircraft, with the control inputs as $\Pi(t)$ and $\chi(t)$.

To start, when analyzing the dynamics of the cruise flight (see \cite{1}), it becomes evident that the heading angle dynamic persists, due to the involvement of the bank angle, $\mu(t)$, in the drag term, $D$, through the lift coefficient, $C_L\propto \frac{1}{\cos(\mu(t))}$.

Putting it into perspective, the lift force is computed as $L=\frac{m(t)g}{\cos(\mu(t))}$. On the other hand, the horizontal component of the lift must adhere to the centrifugal force due to the turning radius, i.e.,: $L\sin(\mu(t))=\frac{m(t)v(t)^2}{r(t)}$ ($r$, being the turning radius). Combining these two, gives $\tan(\mu(t))=\frac{v(t)^2}{r(t)g}$.  

Therefore, if $\mu(t)=O(\epsilon\rightarrow 0)$, $\frac{1}{\cos(\mu(t))}=\sec(\mu(t))=1+\frac{\epsilon^2}{2}+O(\epsilon^4)\approx1, \forall t\in[0,t_f]$; and as a result, $\mu(t)$ becomes isolated in the heading angle dynamic alone, rendering $\chi(t)$ itself the primary active control. 

Notably, the optimal trajectories respecting commercial aircraft in cruise phase are featured by $r^*(t)>>1$, for almost all $t\in[0,t_f]$. More specifically, an optimal curvature in the flight path angle is either due the presence of wind gradients or (momentary) maneuvers over flight-sensitive areas.

Our simulations demonstrate that the fluctuations in $\sec(\mu(t))$ caused by (even) substantial wind gradients are negligible, specifically, $\sec(\mu(t))=\sec\big(\tan^{-1}(\frac{v(t)^2}{r(t)g})\big)\approx1+\frac{\big(\tan^{-1}(\frac{v(t)^2}{r(t)g})\big)^2}{2}\approx1+O(10^{-4}-10^{-3})$. On the other hand, momentary sharp-turn maneuvers are unlikely to significantly affect the overall objective function. Nonetheless, considering bypassing flight-sensitive areas with large radii (e.g., $r(\text{sensitive-area})=O(10^5m)$), even the imposition of hard constraints for flight-sensitive areas cannot give rise to noteworthy errors; because $\sec(\mu(t))=\sec\big(\tan^{-1}(\frac{O(10^4-10^5)}{O(10^6)})\big)\approx1+O(10^{-4}-10^{-3})$ (see e.g., Fig. 12 in \cite{1}, where $\mu(t)$\ in the case of thunderstorm avoidance has been graphed, from which, $\max(\mu(t))\approx 6\ \text{deg.}\rightarrow \sec(\max(\mu(t)))\approx1.005$)

Therefore, the heading dynamics automatically becomes trivial for numerous optimization problems pertaining to commercial aircraft flights.
\section{Appendix}\label{app2}
The compressible drag polar under consideration, $D(m,v,\bar{h})$, adheres to the model presented by Cavcar and Cavcar \cite{8}. Furthermore, we consider Boeing 767-300ER aircraft (a typical twin-engine, wide-body, transport aircraft) with maximum takeoff weight (MTOW) of 186,880 kg  and maximum fuel weight of 73,635 kg \cite{19}. We also consider International Standard Atmospheric (ISA) model, in order to represent $T_{max}(v,\bar{h})$, $C_s(v,\bar{h})$, and $D(m,v,\bar{h})$ as outlined below:
\begin{equation}\label{eq1mm}
	\begin{split}
		&D=\frac{1}{2}\rho(\bar{h}) v^2 s C_D.
	\end{split}
\end{equation}

where,
\begin{equation}\label{eq2mm}
	\begin{split}
		&\rho(\bar{h})=\frac{P(\bar{h})}{\bar{R}\Theta(\bar{h})},\quad P(\bar{h})=P_0\big(\frac{\Theta(\bar{h})}{\Theta_0}\big)^{\frac{g}{\beta \bar{R}}},\quad \Theta(\bar{h})=\Theta_0-\beta \bar{h},\\
		&C_D=\bigg(C_{D_{0,i}}+\sum_{p=1}^{5}k_{0p}\bar{K}^p\bigg)+\bigg(C_{D_{1,i}}+\sum_{p=1}^{5}k_{1p}\bar{K}^p\bigg)C_L+\bigg(C_{D_{2,i}}+\sum_{p=1}^{5}k_{2p}\bar{K}^p\bigg)C_L^2,\\
		&M={v}({c_0 \bar{R} \Theta})^{-\frac{1}{2}},\quad
		\bar{K}:=\bar{K}(M)=
		\begin{cases}
			{(M-0.4)^2}{(1-M^2)^{-\frac{1}{2}}} & M\geq 0.4,\\
			0 & M<0.4.\\
		\end{cases} 
	\end{split}
\end{equation}

%

In the above, $\rho$, $P$, and $\Theta$ represent air density, pressure, and temperature respectively, and $P_0$ is the pressure at sea level. $M$ denotes the Mach number, $s$ delineates the wing surface area, $\beta$ is the temperature lapse factor, $\bar{R}$ stands for the ideal gas constant, $c_0$ is the isentropic expansion factor, $C_{D_{x,i}}$ denotes the incompressible drag coefficients, whereas $k_{xy}$ represents the compressible drag coefficients. Moreover, the lift coefficient is defined as $C_L=\frac{2L}{\rho s v^2}$.   

The maximum thrust ($T_{max}$) and specific fuel consumption are represented by the models presented in \cite{20,21,22} and summarized in \cite{7}:
\begin{equation}\label{eq3m}
	\begin{split}
		&T_{max}=\frac{P(\bar{h})\Theta_0}{P_0\Theta(\bar{h})}T_0\bigg(1+\frac{c_0-1}{2}M^2\bigg)^{\frac{c_0}{c_0-1}}(1-0.49M^{\frac{1}{2}}),\\
		&C_s=C_{s,0}\big(\frac{\Theta}{\Theta_0}\big)^{\frac{1}{2}}(1+1.2M).
	\end{split}
\end{equation}

Table \ref{table1} tabulates the involved coefficients in the above definitions for $D$, $T_{max}$, and $C_s$.

\begin{table}[H]\label{table1}
	\begin{center}
		\rowcolors{2}{lightblue}{white}
		\setlength{\arrayrulewidth}{0.9pt} 
		\fontsize{9}{13}\selectfont
		\caption{Flight Model Constants}
		\label{table1}
		\begin{tabular}{|c||c||c||c||c||c||c||c||c|} 
			\hline
			parameter & value & unit (SI) & parameter & value & unit (SI)&parameter&value&unit(SI)\\
			\hline
			$k_{01}$&0.0067&$-$&$k_{15}$&-2.7672&$-$&$P_0$&101325&$Pa$\\
			\hline
			$k_{02}$&-0.1861&$-$& $k_{21}$&-0.1317&$-$&$c_0$&1.4&$-$\\
			\hline
			$k_{03}$&2.2420&$-$&$k_{22}$&1.3427&$-$&$s$&283.3&$m^2$\\
			\hline
			$k_{04}$&-6.4350&$-$&$k_{23}$&-1.2839&$-$&$T_0$&$5\times10^5$&$N$\\
			\hline
			$k_{05}$&6.3428&$-$&$k_{24}$&5.0164&$-$&$C_{s,0}$&$9\times 10^{-6}$&$kgN^{-1}s^{-1}$\\
			\hline
			$k_{11}$&0.0962&$-$&$k_{25}$&0.0000&$-$&$\bar{R}$&287.04&$J.kg^{-1}K^{-1}$\\
			\hline
			$k_{12}$&-0.7602&$-$&$C_{D_{0,i}}$&0.01322&$-$&$g$&9.81&$m/s^2$\\
			\hline
			$k_{13}$&-1.2870&$-$&$C_{D_{1,i}}$&-0.00610&$-$&$\Theta_0$&288.15&$K$\\
			\hline
			$k_{14}$&3.7925&$-$&$C_{D_{2,i}}$&0.06000&$-$&$\beta$&0.0065&$K.m^{-1}$\\
			\hline
		\end{tabular}
	\end{center}
\end{table}

\newpage
	\begin{center}
	{\fontsize{15}{24}\selectfont \textbf{A Surrogate Framework for Cruise-Phase Optimization of Commercial Aircraft: Supplementary Material}}
	
	\vspace{0.5cm}
	
	\textbf{Amin Jafarimoghaddam}$^\star$\footnote{$^\star$Corresponding Author.}, \textbf{Manuel Soler}$^{\dagger}$, \textbf{María Cerezo-Magaña} $^{\dagger}$\\	
	$^{\star,\dagger}${\em{Department of Aerospace Engineering, Universidad Carlos III de Madrid. Avenida de la Universidad, 30, Leganes, 28911 Madrid, Spain
	}}\footnote{Email addresses: ajafarim@pa.uc3m.es (A. Jafarimoghaddam), masolera@ing.uc3m.es (M. Soler), mcerezo@ing.uc3m.es (M. Cerezo)}	\\	
\end{center}

\vspace{0.5cm}

\section{Introduction}
This supplementary document provides a detailed account of the mathematical formulations underlying the MATLAB implementation of the \emph{Surrogate Framework} for cruise-phase optimization of commercial aircraft, together with an example, targeting wind-induced variability in optimal solutions.

In \textbf{Section~D}, we describe the aerodynamic models, environmental inputs, and optimal control problem (OCP) embedded in the code. The section further outlines the modeling of flight-sensitive areas, wind fields, drag and thrust dynamics, fuel consumption, and the atmospheric model. We then present the surrogate OCP formulation, its associated boundary-value problem, and the derivation of the optimal aircraft speed (\href{https://github.com/Amin-M-Jafari/A-Surrogate-Framework-for-General-Cruise-Phase-Optimization-of-Commercial-Aircraft.git}{\textcolor{blue}{\textbf{Link to the General Code}}}).  

In \textbf{Section~E}, we analyze wind-induced variability in optimal solutions via a Monte Carlo study based on the surrogate framework and the composite inviscid wind flow model. The latter is argued to capture large-scale atmospheric flow structures while satisfying the continuity equation, thereby producing realistic wind ensembles.

\section{MATLAB Code for Cruise-Phase Optimization}

The MATLAB code facilitates cruise-phase optimization by integrating aerodynamic and environmental parameters. Users may either supply essential inputs, such as flight-sensitive areas, wind functions, and models for drag, thrust, and fuel consumption, or alternatively use the interactive built-in environment to define these components. The following subsections provide detailed descriptions of these elements.

\subsection{Flight-Sensitive Areas and Clustering}
The code accepts scattered points representing flight-sensitive areas and clusters them into oblique ellipses according to a user-defined number of clusters. Each area is defined by:
\begin{equation}
	\begin{split}
		&		G_{s,i}(x,y) = \left\| X_s - X_{sc,i} \right\|_{\mathbf{A}},\quad X_s =
		\begin{pmatrix}
			x\\[0.2cm]
			y
		\end{pmatrix}, \quad 
		\left\| \bullet \right\|_{\mathbf{A}} = \sqrt{(\bullet)^{\top} \mathbf{A} (\bullet)},
	\end{split}
\end{equation}
where the metric matrix $\mathbf{A}$ is defined as:
\begin{equation}
	\mathbf{A} = \mathbf{R} \mathbf{D} \mathbf{R}^{\top}, \quad 
	\mathbf{D} =
	\begin{pmatrix}
		\frac{1}{a_i^2} & 0\\[0.2cm]
		0 & \frac{1}{b_i^2}
	\end{pmatrix}, \quad 
	\mathbf{R} =
	\begin{pmatrix}
		\cos(\alpha_i) & -\sin(\alpha_i)\\[0.2cm]
		\sin(\alpha_i) & \cos(\alpha_i)
	\end{pmatrix}.
\end{equation}

\subsubsection{Clustering Method}
Given a set of points \( \{X_{s,j}\}_{j=1}^{N} \) and a prescribed number of clusters \( K \), the clustering method proceeds as follows:
\begin{enumerate}
	\item \textbf{Clustering:} Using a clustering algorithm (i.e., \( k \)-means) to partition the points into \( K \) clusters.
	\item \textbf{Centroid Computation:} For each cluster \( i \), the code computes the centroid \( X_{sc,i} \).
	\item \textbf{Covariance and Eigenvalue Decomposition:} Determining the covariance matrix \( \Sigma_i \) and performing eigenvalue decomposition: $			\Sigma_i = \mathbf{R}_i 
	\begin{pmatrix}
		\lambda_{i1} & 0 \\[0.2cm]
		0 & \lambda_{i2}
	\end{pmatrix}
	\mathbf{R}_i^{\top},$
	where \( \lambda_{i1}, \lambda_{i2} \) are the eigenvalues and \( \mathbf{R}_i \) is the rotation matrix.
	\item \textbf{Ellipse Construction:} We set the ellipse semi-axes as $ a_i = k\sqrt{\lambda_{i1}},$ and $b_i = k\sqrt{\lambda_{i2}}$, where \( k \) is a scaling factor ensuring coverage of the points.
	\item \textbf{Orientation:} The ellipse orientation is given by $			\alpha_i = \tan^{-1}\left(\frac{\mathbf{R}_{i,21}}{\mathbf{R}_{i,11}}\right)$.
\end{enumerate}

\subsection{Wind Functions}
The user may either provide the wind velocity components in the \(x\)- and \(y\)-directions, denoted by \(W_x(x,y)\) and \(W_y(x,y)\), or employ the interactive built-in environment for this purpose. In the latter case, the user specifies only the absolute maximum wind magnitude, and the code automatically generates and visualizes random wind fields. These fields are constructed using the composite inviscid wind flow model, as described in the paper.
\subsection{Aircraft Drag Model}
The drag coefficient is modeled as:
\begin{equation}
	C_D = C_D(C_L, M, v) = a(M,v)C_L^2 + b(M,v)C_L + c(M,v),
\end{equation}
where \( C_L \) is the lift coefficient, defined as $C_L=\frac{2mg}{\rho s v^2}$, and \( M \) is the Mach number, defined as $M=\frac{v}{\sqrt{c_0\,\bar{R}\,\Theta}},$ and \( v \) is the aircraft speed.

The functions \( a(M,v) \), \( b(M,v) \), and \( c(M,v) \) can be provided by the user. However, if the user switches to the built-in environment, the default setting is the compressible polar drag model by Cavcar and Cavcar as detailed in the Appendix-B of the paper.

\subsection{Thrust and Fuel-Rate Models}
The user may either input the thrust and fuel-rate models as functions of Mach number and speed, given by \(T_{\max} = T_{\max}(M,v)\) and \(C_s = C_s(M,v)\), or switch to the default mode, which employs the thrust and fuel-rate models as described in the Appendix-B of the paper.

\subsection{Cruise Altitude and Atmospheric Model}
The user must specify the scalar cruise altitude \( \bar{h} \), which is then automatically used within the International Standard Atmosphere (ISA) model to determine the corresponding temperature, pressure, and air density.

\subsection{The Surrogate Optimization Framework}
As detailed in the paper, the surrogate optimization framework for the objective function $		\min_{v(t),\chi(t),t_f} \; \mathcal{J} := \mathbf{c}_t\,t_f + \mathbf{c}_m\,m_f + z_f$ (omitting mixed-inequality constraints) reduces to the following BVP:
%
%
%
%


\begin{equation}\label{eq:ode_system}
	\begin{split}
		&\frac{dx}{dt} = v(t)\frac{1}{\sqrt{1+q^2}} + W_x(x,y) =: F_x,\qquad \frac{dy}{dt} = v(t)\frac{q}{\sqrt{1+q^2}} + W_y(x,y) =: F_y,\\[0.2cm]
		&\frac{dm}{dt} = -\frac{1}{2}\rho s\, v(t)^2 C_D C_s =: F_m,\qquad \frac{dz}{dt} = g(x,y) =: F_z,\\[0.2cm]
		&\frac{d\lambda_{x}}{dt} = -\frac{\partial g}{\partial x} - \lambda_{x}\Bigl(\frac{\partial W_x}{\partial x} + q\frac{\partial W_y}{\partial x}\Bigr),\\[0.2cm]
		&\frac{dq}{dt} = -\frac{\partial W_x}{\partial y} + \Bigl(\frac{\partial W_x}{\partial x} - \frac{\partial W_y}{\partial y}\Bigr)q + \frac{\partial W_y}{\partial x}\,q^2 + \frac{1}{\lambda_{x}}\Bigl(q\frac{\partial g}{\partial x} - \frac{\partial g}{\partial y}\Bigr).
	\end{split}
\end{equation}
where $g(x,y) := \sum_{i=1}^{n} \frac{\mathbf{c}_{s,i}}{G_{s,i}}$, and $q=\tan\big(\chi(t)\big)$. The initial conditions are:
\begin{equation}\label{eq:init_cond}
	\begin{aligned}
		x(0) &= 0, \quad y(0)=0, \quad m(0)=m_0, \quad z(0)=0.
	\end{aligned}
\end{equation}
BVP is formed with three shooting parameters $\lambda_x(0)$, $q(0)$, and $t_f$ and three closing equations at \( t_f \):
\begin{equation}\label{eq:closing}
	\begin{split}
		&x(t_f)= x_f,\qquad y(t_f)= y_f,\\[0.2cm]
		\lambda_m(t_f) &= \frac{-\mathbf{c}_t - F_z(t_f) - \lambda_x(t_f)\bigl(F_x(t_f) + q(t_f)F_y(t_f)\bigr)}{F_m(t_f)} = \mathbf{c}_m.
	\end{split}
\end{equation}

The optimal aircraft speed \( v(t) \) is determined implicitly by solving:
\begin{equation}\label{eq:opt_v}
	\lambda_x\sqrt{1+q^2} - \Biggl(\mathbf{c}_t + F_z + \lambda_x F_x + \lambda_xq\,F_y\Biggr)\left(\frac{1}{C_s}\frac{\partial C_s}{\partial{v}} + \frac{2}{v} + \frac{1}{C_D}\frac{\partial C_D}{\partial{v}}\right)=0.
\end{equation}

\textbf{Note:} After the user provides the model inputs, the implementation automatically promotes the necessary quantities to symbolic variables and uses symbolic differentiation to compute the derivatives appearing in the state dynamics and in the closed-form optimal-speed formula.

\subsection{Post-Processing and Visualization}
After obtaining the optimal trajectories, the MATLAB code plots the evolution of the state variables \( x(t) \), \( y(t) \), \( m(t) \), \( q(t) \), \( z(t) \), and the costate \( \lambda_x(t) \) in separate figures. Moreover, the throttle setting \( \Pi(t) \) is reconstructed at each time step using:
\begin{equation}
	\Pi(t)=\frac{m\frac{dv}{dt}+D}{T_\text{max}}.
\end{equation}


\section{Stochastic Analysis of the Wind-Induced Variability in Optimal Solution}
Without loss of generality, in the presence of wind, the aircraft's overall strategy is maximizing access to the strongest favorable wind while minimizing deviation from the straight line. Therefore, analyzing the influence of wind, referred to as \textit{wind gain} within the minimum-time framework reveals the fundamental contribution of wind to minimizing both time and fuel objectives. 

The purpose of the following analysis is to assess the extent to which the optimal solution obtained with the average wind field over the entire domain deviates from the optimal solution when the full space-dependent wind field is considered. To this end, we derive PDFs for both cases and demonstrate that the probability of $\tfrac{t_{f,\text{rand}}}{t_{f,\text{average}}}$ exceeding a deviation of about 4\% is negligible (even for extreme cases), where $t_{f,\text{rand}}$ corresponds to the optimal final time obtained with random wind fields, and $t_{f,\text{average}}$ denotes the optimal final time with the average of the same wind fields.  

Furthermore, we introduce a bandwidth averaging strategy that concentrates the probability distribution around its center. Consequently, we argue that replacing the space-dependent wind field with the bandwidth-averaged wind of the same field leaves the optimal objective function nearly unaffected. This finding is particularly useful in scenarios where incorporating the full space-dependent wind field is not feasible or computationally efficient.

\subsection{Bandwidth Averaging Strategy}
Let the straight‐line chord from \((0,0)\) to \((L,L)\) have length $L_0 =\sqrt{2}\,L$; and let \(\gamma_s\colon[0,1]\to\Omega\) be a time‐optimal curve of length: 
\begin{equation}
	S \;=\;\int_{0}^{1}\bigl\|\gamma_s'(\tau)\bigr\|\,d\tau.
\end{equation}
If the aircraft’s still‐air speed is \(v_0>0\), and we assume (in the best‐case scenario) a constant tailwind \(+W_{s,\max}\) along the curved path \(\gamma_s\) and a constant headwind \(-\,W_{L_0,\min}\) along the straight‐line chord, then the corresponding flight times satisfy:
\begin{equation}
	T_s \;=\;\frac{S}{\,v_0 + W_{s,\max}\,}, 
	\qquad 
	T_0 \;=\;\frac{L_0}{\,v_0 - W_{L_0,\min}\,}.
\end{equation}
Requiring the wind‐optimal path to be faster than the straight‐line path yields:
\begin{equation}
	\frac{S}{\,v_0 + W_{s,\max}\,}
	\;<\;
	\frac{L_0}{\,v_0 - W_{L_0,\min}\,}
	\;\;\Longrightarrow\;\;
	r \;:=\;\frac{S}{\,L_0\,}
	\;<\;
	\frac{\,v_0 + W_{s,\max}\,}{\,v_0 - W_{L_0,\min}\,}.
\end{equation}
In a symmetric geometry one may set \(W_{s,\max} = W_{L_0,\min} = \max_{(x,y)\in\Omega}\|\mathbf{W}(x,y)\|\), but since the probability of this event is nearly zero, a more realistic assumption is to replace these extremal values by average values over $\Omega$ by defining:
\begin{equation}
	\overline{W}_{\Omega} 
	\;=\; 
	\frac{1}{|\Omega|}\iint_{\Omega}\|\mathbf{W}(x,y)\|\,dx\,dy.
\end{equation}

therefore, we can write the upper bound for the effective ratio $r:=\frac{S}{L_0}$ as: $	r \;=\;\frac{\,1 + \frac{\overline{W}_{\Omega}}{\,v_{0}\,}\,}{\,1 - \frac{\overline{W}_{\Omega}}{\,v_{0}\,}\,}.$

Classical extremal theory shows that among all planar curves of fixed length joining two points, the circular arc maximizes the maximum distance from the chord. Specifically, \(r\) (which is obtained using the wind data) determines a unique central angle \(\theta\in(0,\pi)\) satisfying: $\frac{\theta}{2\sin(\theta/2)} \;=\; r.$

The maximum half‐width \(h\) of the corresponding circular arc above the chord is:
\begin{equation}
	h \;=\;\frac{d}{2\sin(\theta/2)}\Bigl(1 - \cos(\tfrac{\theta}{2})\Bigr).
\end{equation}
and hence the total bandwidth is $D =2\,h$ (see Fig. \ref{fig:band}\textbf{-a}, and Fig. \ref{fig:band}\textbf{-b}). 

\begin{figure}[H]
	\centering
	\subfigure[]{
		\includegraphics[width=0.42\textwidth]{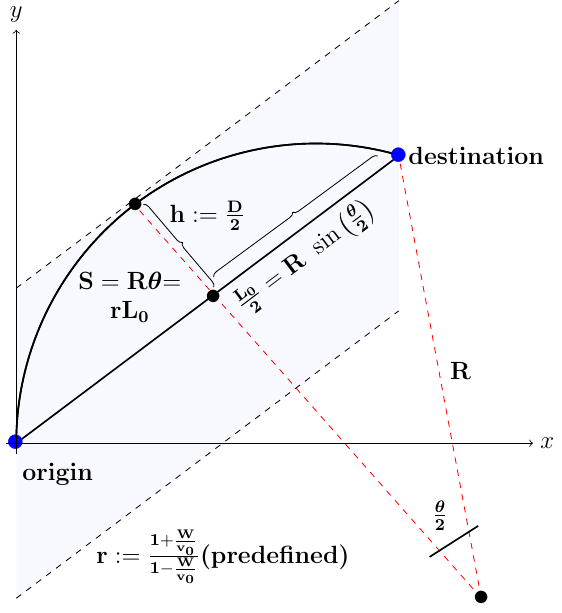}
		\label{fig:schematic}
	}
	\subfigure[]{
		\includegraphics[width=0.5\textwidth]{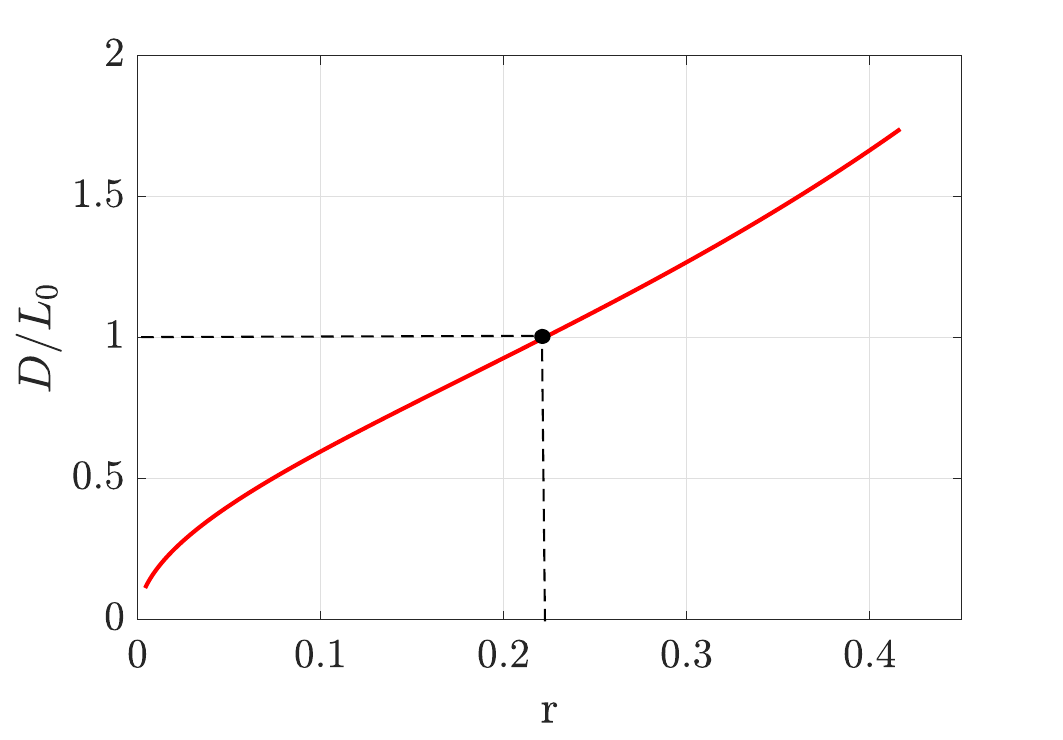}
		\label{fig:dplot}
	}
	\caption{a) Diagram of bandwidth-based average wind area, b) $\frac{D}{L_0}$ as a function of $r$}
	\label{fig:band}
\end{figure}

\subsection{Monte Carlo Simulation and Results}    
\noindent Let \((\Gamma,\mathcal{F},\mathbb{P})\) be the underlying probability space.  We assume given a measurable mapping: 
\begin{equation}
	\omega \;\mapsto\; \mathcal{W}(\,\cdot\,;\omega)\;=\;\bigl(W_{x}(x,y;\omega),\,W_{y}(x,y;\omega)\bigr)\in L^\infty(\Omega;\mathbb{R}^2).
\end{equation}
where \(\Omega=[0,L]\times[0,L]\).  We fix a constant \(W_{\max}>0\) and define the scaled field:
\begin{equation}
	\widetilde{\mathcal{W}}(x,y;\omega)
	=\frac{W_{\max}}{\lVert \mathcal{W}(\,\cdot\,;\omega)\rVert_{L^\infty}}\,\mathcal{W}(x,y;\omega).
\end{equation}
The average wind vector over the entire domain is written as:
\begin{equation}
	\overline{W}_{\Omega}(\omega)
	=\bigl\langle \sqrt{\widetilde{\mathcal{W}}_{x}(x,y;\omega)^2 + \widetilde{\mathcal{W}}_{y}(x,y;\omega)^2}\bigr\rangle_{\Omega}.
\end{equation}
where $\langle f\rangle_{(.)}
:=\frac{1}{|(.)|}\int\!\!\int_{(.)} f(x,y)\,dx\,dy$, for any scalar $f$, and $|(.)|$ is the area (Lebesgue measure) of $(.)$.

%
The effective ratio is then set to:
\begin{equation}
	r \;=\;\frac{\,1 + \frac{\overline{W}_{\Omega}(\omega)}{\,v_{0}\,}\,}{\,1 - \frac{\overline{W}_{\Omega}(\omega)}{\,v_{0}\,}\,}.
\end{equation}
Band‐width radius is obtained by solving the implicit equation:
\begin{equation}
	G(\Theta;\omega)
	=\frac{\Theta}{2\,\sin(\Theta/2)}
	-r,
	\qquad \Theta\in(0,\pi).
\end{equation}
Under mild hypotheses, there is a unique root \(\Theta^{*}(\omega)\in(0,\pi)\) with $G\bigl(\Theta^{*}(\omega);\omega\bigr)=0$. Therefore, the band‐width distance is obtained as: 
\begin{equation}
	D(\omega)
	=\frac{2\,L}{2\,\sin\!\bigl(\Theta^{*}(\omega)/2\bigr)}\Bigl(1 - \cos\!\bigl(\tfrac{\Theta^{*}(\omega)}{2}\bigr)\Bigr).
\end{equation}
giving the masked area as:
\begin{equation}
	\mathcal{B}(\omega)
	=\{(x,y)\in\Omega :|\,y - x\,|\le b(\omega)/2\},\quad b(\omega)=\frac{D(\omega)}{\cos(\pi/4)}.
\end{equation}
Therefore, the average wind within the band-width is written as:
\begin{equation}
	\overline{W}_b(\omega)
	=\bigl\langle \sqrt{\widetilde{\mathcal{W}}_{x}(x,y;\omega)^2 + \widetilde{\mathcal{W}}_{y}(x,y;\omega)^2}\bigr\rangle_{\mathcal{B}(\omega)}.
\end{equation}
The representative boundary value problem is considered as:
\begin{equation}\label{eq1}
	\begin{split}
		&\frac{dx}{dt}=v_{0}\cos(\chi(t))+\widetilde{\mathcal{W}}_x(x,y,\omega),\\
		&\frac{dy}{dt}=v_{0}\sin(\chi(t))+\widetilde{\mathcal{W}}_y(x,y,\omega),\\
		&\frac{dq}{dt}=-\frac{\partial \widetilde{\mathcal{W}}_x}{\partial y}+\Bigl(\frac{\partial \widetilde{\mathcal{W}}_x}{\partial x}-\frac{\partial \widetilde{\mathcal{W}}_y}{\partial y}\Bigr)q+\Bigl(\frac{\partial \widetilde{\mathcal{W}}_y}{\partial x}\Bigr)q^2,\quad q:=\tan\bigl(\chi(t)\bigr),\\
		& x(0)=y(0)=0,\quad x(t_f)=y(t_f)=L,\quad q(0)=unknown,\quad t_f=unknown
	\end{split}
\end{equation}
The solution is obtained by solving:
\begin{equation}
	\mathcal{F}(t_f,q(0);\omega)
	=\bigl(x(t_f;q(0),\omega)-L,\;y(t_f;q(0),\omega)-L\bigr)=(0,0).
\end{equation}
We sample \(\{\omega_{k}\}_{k=1}^{N}\) i.i.d.\ from \(\mathbb{P}\) and  for each \(k\), compute:
\begin{equation}
	t_{k}
	:= t_{f}(\omega_{k}),
	\qquad
	t_{k}^{\mathrm{avg}}
	:= t_{f}^{\mathrm{avg}}(\omega_{k}).
\end{equation}
%

Fig.~\ref{fig:monte} shows the PDF as a function of the relative deviation for different values of the index $p := \frac{\max_{\Omega}(||\mathbf{W}||)}{v_0}$, which represents the relative magnitude of the wind field. Clearly, as $p$ increases, the PDFs become wider, allowing for more probable deviations from the average wind field. Nonetheless, we observed that if the wind is averaged over the proposed bandwidth, the PDF remains narrow even for larger $p$ values. In other words, these figures suggest that the space-dependent wind can be safely replaced with an average wind (constant wind field) if the averaging is performed within the bandwidth domain. This finding is particularly useful for optimal control applications where analyzing the full space-dependent wind field is not computationally efficient.

\begin{figure}[htp]
	\centering
	\subfigure[]{
		\includegraphics[width=0.485\textwidth]{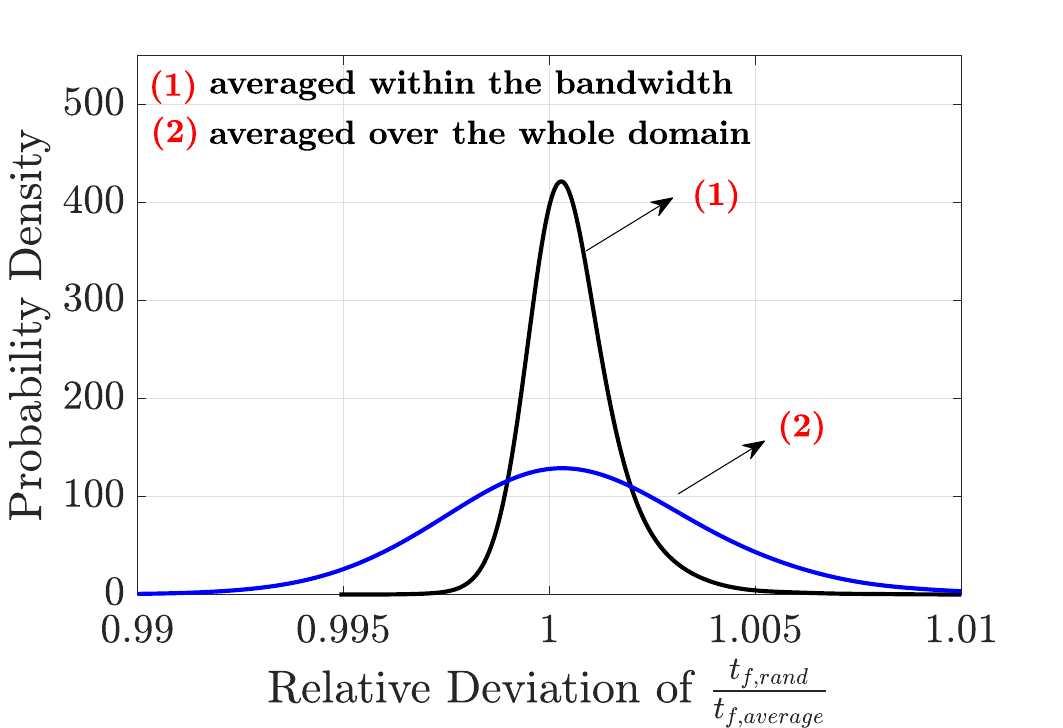}
		\label{fig:dplot}
	}
	\subfigure[]{
		\includegraphics[width=0.485\textwidth]{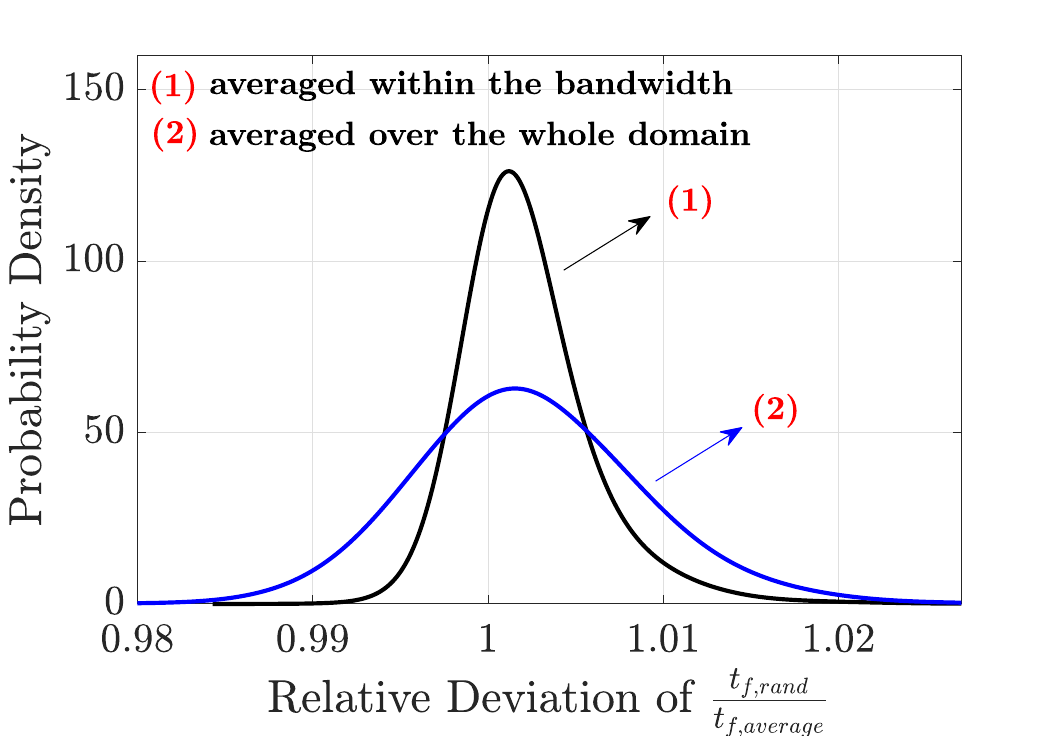}
		\label{fig:dplot}
	}
	\subfigure[]{
		\includegraphics[width=0.485\textwidth]{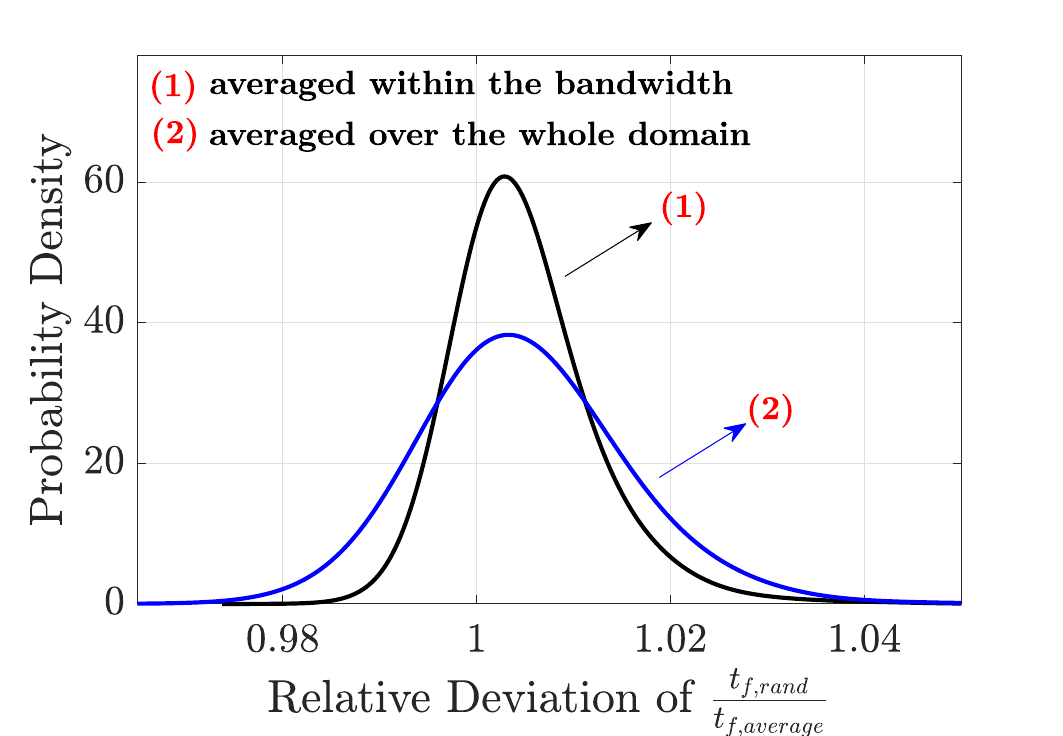}
		\label{fig:dplot}
	}
	\subfigure[]{
		\includegraphics[width=0.485\textwidth]{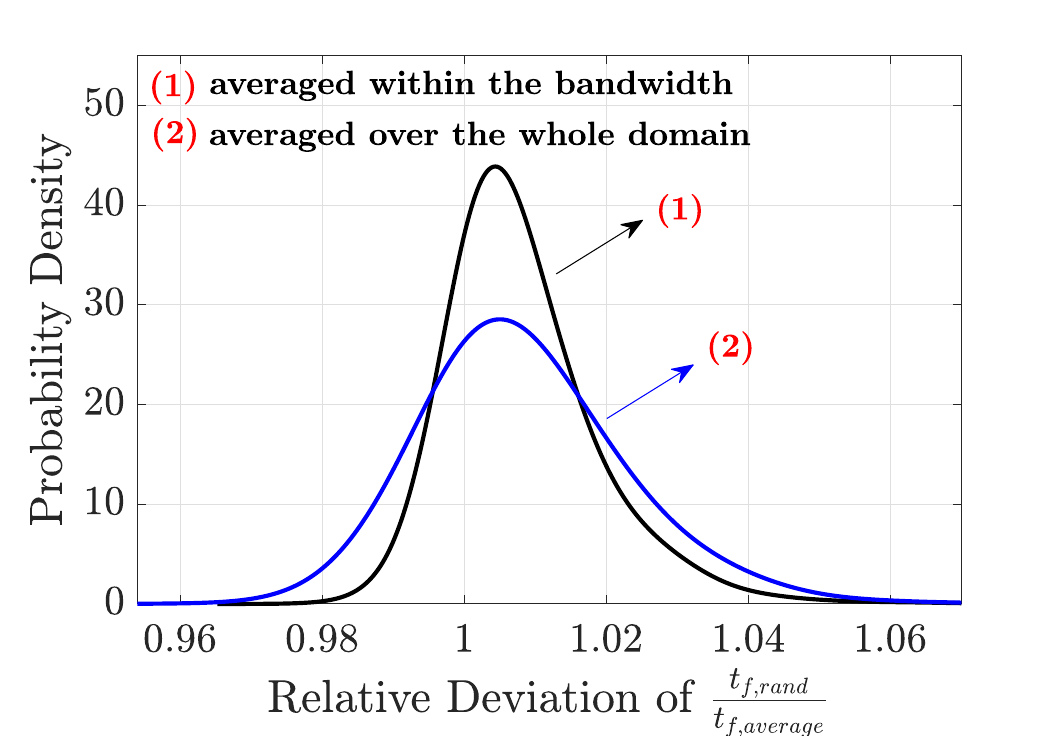}
		\label{fig:dplot}
	}
	\caption{Probability Distribution Functions (PDF): a) $p:=\frac{\max_{\Omega}(||\mathbf{W}||)}{v_0}=1/24$, b) $p:=\frac{\max_{\Omega}(||\mathbf{W}||)}{v_0}=2/24$, c) $p:=\frac{\max_{\Omega}(||\mathbf{W}||)}{v_0}=3/24$, d) $p:=\frac{\max_{\Omega}(||\mathbf{W}||)}{v_0}=4/24$}
	\label{fig:monte}
\end{figure}

\end{document}